\newtheorem{theorem}{Theorem}
\newtheorem{lemma}{Lemma}
\newtheorem{definition}{Definition}
\newtheorem{property}{Property}
\newtheorem{example}{Example}
\newtheorem{corollary}{Corollary}
\newcommand{\n}{\hspace*{-6pt}}
\DeclareMathOperator{\diag}{diag}
\DeclareMathOperator{\dist}{dist}
\DeclareMathOperator{\diam}{diam}
\DeclareMathOperator{\Span}{Span}
\title[Mixed HK dynamics] 
      {Mixed Hegselmann-Krause dynamics}
\author[Hsin-Lun Li]{}
\subjclass{37N99, 05C50, 91C20, 93D50, 94C15.}
 \keywords{Mixed Hegselmann-Krause model, Cheeger's inequality, Perron-Frobenius for Laplacians, Courant-Fischer formula, averaging dynamics, consensus, asymptotic stability.}
 \email{hsinlunl@asu.edu}
\begin{document}
\maketitle

\centerline{\scshape Hsin-Lun Li}
\medskip
{\footnotesize
 \centerline{School of Mathematical and Statistical Sciences}
   \centerline{Arizona State University, Tempe, AZ 85287, USA}

} 

\bigskip

 \centerline{(Communicated by Chris Cosner)}

\begin{abstract}
 The original Hegselmann-Krause (HK) model consists of a set of~$n$ agents that are characterized by their opinion, a number in~$[0, 1]$.
 Each agent, say agent~$i$, updates its opinion~$x_i$ by taking the average opinion of all its neighbors, the agents whose opinion differs from~$x_i$ by at most~$\epsilon$.
 There are two types of~HK models: the synchronous~HK model and the asynchronous~HK model.
 For the synchronous model, all the agents update their opinion simultaneously at each time step, whereas for the asynchronous~HK model, only one agent chosen uniformly at
 random updates its opinion at each time step.
 This paper is concerned with a variant of the~HK opinion dynamics, called the mixed~HK model, where each agent can choose its degree of stubbornness and mix its opinion
 with the average opinion of its neighbors at each update.
 The degree of the stubbornness of agents can be different and/or vary over time.
 An agent is not stubborn or absolutely open-minded if its new opinion at each update is the average opinion of its neighbors, and absolutely stubborn if its opinion does not
 change at the time of the update.
 The particular case where, at each time step, all the agents are absolutely open-minded is the synchronous~HK model.
 In contrast, the asynchronous model corresponds to the particular case where, at each time step, all the agents are absolutely stubborn except for one agent chosen uniformly
 at random who is absolutely open-minded.
 We first show that some of the common properties of the synchronous~HK model, such as finite-time convergence, do not hold for the mixed model.
 We then investigate conditions under which the asymptotic stability holds, or a consensus can be achieved for the mixed model.
\end{abstract}

\section{Introduction}
 The Hegselmann-Krause (HK) model is a popular opinion dynamics model describing the interactions among a population of agents.
 In the standard~HK model, there are~$n$ agents and each agent updates its opinion by taking the average opinion of its neighbors.
 More precisely, let
 $$ x_i (t + 1) = \frac{1}{|N_i(t)|} \sum_{j \in N_i (t)} x_j (t) \quad \hbox{where} \quad x_i (t) \in \mathbf{R^d} $$
 represents the opinion of agent~$i$ at time~$t \in \mathbf{N}$, and let
 $$ N_i (t) = \{j \in [n] : \|x_i (t) - x_j (t) \| \leq \epsilon \} \quad \hbox{where} \quad [n] = \{1, 2, \ldots, n \} $$
 be the set of agents whose opinion differs from the opinion of agent~$i$ by at most~$\epsilon$, that we call the neighbors of agent~$i$ at time~$t$.
 Here,~$\|\ \, \|$ refers to the Euclidean norm and~$\epsilon$ is a positive number that represents a confidence bound.
 The authors of~\cite{1} considered the one-dimensional modified~HK model as follows:
 $$ x_i (t + 1) = \alpha_i \,x_i (t) + \frac{(1 - \alpha_i)}{|N_i (t)|} \sum_{j \in N_i (t)} x_j (t) \quad \hbox{where} \quad x_i (t) \ \hbox{and} \ \alpha_i \in [0, 1]. $$
 In words, the convex combination indicates that agent~$i$ mixes its opinion with the average opinion of its neighbors, with the parameter~$\alpha_i$ measuring
 the degree of stubbornness of agent~$i$.
 In this paper, we extend the modified~HK model to higher dimensional sets of opinions and allow the degree of stubbornness~$\alpha_i$ to vary over time.
 The resulting model can be expressed in matrix form as
\begin{equation}
\label{mHK}
 x (t + 1) = \diag (\alpha (t)) \,x (t) + (I - \diag (\alpha (t))) \,A (t) \,x (t)
\end{equation}
 where $A (t) \in \mathbf{R^{n\times n}}$ is row stochastic with
 $$ A_{ij} = \mathbbm{1} \{j \in N_i (t) \} / |N_i(t)| $$
 and where
 $$ \begin{array}{rclcl}
      x (t) & \n = \n & (x_1 (t), x_2 (t), \ldots, x_n (t))' & \n = \n & \hbox{transpose of} \ (x_1 (t), x_2 (t), \ldots, x_n (t)), \vspace*{4pt} \\
      \alpha (t) & \n = \n & (\alpha_1 (t), \alpha_2 (t), \ldots, \alpha_n (t))' & \n = \n & \hbox{transpose of} \ (\alpha_1 (t), \alpha_2 (t), \ldots, \alpha_n (t)). \end{array} $$
 In particular, agent~$i$ is absolutely stubborn when~$\alpha_i (t) = 1$ and absolutely open-minded when~$\alpha_i (t) = 0$.
 Observe also that~\eqref{mHK} reduces to
\begin{itemize}
 \item the synchronous HK model if~$\alpha (t) = \Vec{0}$ for all $t \geq 0$ and \vspace*{2pt}
 \item the asynchronous HK model if~$\alpha (t) = (\mathbbm{1} \{j \neq i (t) \})_{j = 1}^n$ for all~$t\geq0$ and for some~$i (t) \in [n]$ chosen uniformly at random.
\end{itemize}
 Our main objective is to study the strategies the agents should play so that the asymptotic stability holds, or a consensus can be achieved.
 Some of the common properties of the synchronous~HK model do not hold for the mixed~HK model.
 Before going into the details, we need the following definitions.
\begin{definition}{\rm
 An \emph{opinion profile} at time~$t$ or simply a \emph{profile} at time~$t$ is an undirected graph~$\mathscr{G} (t)$ with the vertex set and edge set
 $$ \mathscr{V} (t) = [n] \quad \hbox{and} \quad \mathscr{E} (t) = \{ij : i\neq j \ \hbox{and} \ \|x_i (t) - x_j (t) \| \leq \epsilon \}. $$}
\end{definition}
 Apart from \cite{2}, the opinion profile is simple.
\begin{definition}{\rm
 The \emph{termination time} of~$n$ agents, $T_n$, is the maximum number of iterations in~\eqref{mHK} by reaching a steady state over all initial profiles, i.e.,
 $$ T_n = \inf \{t\geq0 : x (t) = x (s) \ \hbox{for all} \ s \geq t \}. $$}
\end{definition}
\begin{definition}{\rm
 The \emph{convex hull} generated by~$v_1, v_2, \ldots, v_n \in \mathbf{R^d}$ is the smallest convex set containing~$v_1, v_2, \ldots, v_n$, i.e.,
 $$ C (\{v_1, v_2 \ldots, v_n \}) = \{v : v = \sum_{i = 1}^n \lambda_i v_i \ \hbox{where} \ (\lambda_i)_{i = 1}^n \ \hbox{is stochastic} \}. $$}
\end{definition}
\begin{definition}{\rm
 A profile $\mathscr{G}(t)$ is \emph{$\mathbf{\delta}$-trivial} if any two of its vertices are at a distance of at most~$\delta$ apart.
 In particular, $\mathscr{G}(t)$ is complete if it is $\epsilon$-trivial.}
\end{definition}
\begin{definition}{\rm
 For~$\delta > 0$, $x(t)$ in~\eqref{mHK} is a \emph{$\mathbf{\delta}$-equilibrium} if there is a partition
 $$ \{G_1, G_2, \ldots, G_m \} \ \hbox{of the set} \ \{x_1 (t), x_2 (t), \ldots, x_n (t) \} $$
 such that the following two conditions hold:
 $$ \dist (C (G_i), C (G_j)) > \epsilon \ \hbox{for all} \ i \neq j \quad \hbox{and} \quad \diam (C (G_i)) \leq \delta \ \hbox{for all} \ i \in [m]. $$}
\end{definition}
\begin{definition}{\rm
 A \emph{merging time} is a time~$t$ that two agents with different opinions at time~$t - 1$ have the same opinion at time~$t$, i.e.,
 $$ x_i (t) = x_j (t) \quad \hbox{and} \quad x_i (t - 1) \neq x_j (t - 1) \quad \hbox{for some} \quad i, j \in [n]. $$}
\end{definition}
 The following are some properties distinct from the synchronous HK model.
\begin{property}
 The termination time is not finite.
\end{property}
\begin{example}{\rm
 Assume that~$n = 2, \ d = 1$,
 $$ x_1 (0) = 0, \ x_2 (0) = \epsilon \ \hbox{and} \ \alpha_1 (t) = \alpha_2 (t) = 1/2 \ \hbox{for all} \ t \geq 0. $$
 Then, at each time step, $x_1$ and $x_2$ get closer to each other.
 However, never do they reach a steady state in finite time.}
\end{example}
\begin{property}
 Agents merging at time~$t$ may depart at time~$t + 1$.
 In particular, $\mathscr{G}(t)$ $\epsilon$-trivial may not imply that~$x (t + 1)$ in~\eqref{mHK} is a steady state.
\end{property}
\begin{example}{\rm
 Assume that~$n = 3, \ d = 2$,
 $$ \begin{array}{rclrclrcl}
      x_1 (0) & \n = \n & (0, 0), \qquad & \alpha_1 (0) & \n = \n & 0, \qquad & \alpha_1 (1) & \n = \n & 1/3, \vspace*{4pt} \\
      x_2 (0) & \n = \n & (\epsilon, 0), \qquad & \alpha_2 (0) & \n = \n & 0, \qquad & \alpha_2 (1) & \n = \n & 1/2, \vspace*{4pt} \\
      x_3 (0) & \n = \n & (\epsilon/2, \epsilon). \end{array} $$
 Then, $x_1$ and $x_2$ merge at time $t = 1$ but depart at time $t = 2$.}
\end{example}
\begin{property}
 A $\delta$-equilibrium may not exist for all $0<\delta \leq \epsilon$.
\end{property}
\begin{example}{\rm
 Assume that~$n = 3, \ d = 2$,
 $$ x_1 (0) = (0, 0), \ x_2 (0) = (\epsilon, 0), \ x_3 (0) = (\epsilon/2, \epsilon) \ \hbox{and} \ \alpha_1 (t) = \alpha_2 (t) = 1/2 $$
 for all $t \geq 0$.
 Then, $x$ has no $\delta$-equilibrium for all~$0 < \delta \leq \epsilon$.
 Note that vertex 3 of the profile is isolated all the time.}
\end{example}
 The following lemma plays an important role in the proof of the main theorems.
\begin{lemma}
\label{L1}
 Let~$\lambda_1, \ldots, \lambda_n \in \mathbf{R}$ with~$\sum_{i = 1}^n \lambda_i = 0$ and~$x_1, \ldots, x_n \in \mathbf{R^d}$. Then, for
 $$ \lambda_1 x_1 + \lambda_2 x_2 + \cdots + \lambda_n x_n, $$
 the terms with positive coefficients can be matched with the terms with negative coefficients in the sense that
 $$ \sum_{i = 1}^n \,\lambda_i x_i = \sum_{i, c_i \geq 0, j, k \in [n]} c_i (x_j - x_k) \quad \hbox{and} \quad \sum_{i} \,c_i = \sum_{j, \lambda_j \geq 0} \lambda_j. $$
\end{lemma}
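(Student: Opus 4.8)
The plan is to read the desired identity as a transportation (flow‑matching) problem between the indices carrying positive coefficients and those carrying negative coefficients, and to build the decomposition by a greedy pairing justified by induction on the number of nonzero $\lambda_i$. First I would discard the indices with $\lambda_i = 0$, since they contribute to neither side, and split the rest into $P^+ = \{i : \lambda_i > 0\}$ and $P^- = \{k : \lambda_k < 0\}$. Writing $S = \sum_{j \in P^+} \lambda_j$ for the total positive mass, the hypothesis $\sum_i \lambda_i = 0$ forces $\sum_{k \in P^-} (-\lambda_k) = S$; that is, the positive mass equals the negative mass. This balanced-mass observation is exactly what makes a complete pairing possible and is precisely the content of the side condition $\sum_i c_i = \sum_{j, \lambda_j \geq 0} \lambda_j = S$.

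The core step is a greedy match. If all coefficients are zero the statement is vacuous. Otherwise pick $j \in P^+$ and $k \in P^-$, set $c = \min(\lambda_j, -\lambda_k) > 0$, record the term $c(x_j - x_k)$, and update $\lambda_j \leftarrow \lambda_j - c$ and $\lambda_k \leftarrow \lambda_k + c$ while leaving the other coefficients untouched. At least one of these two updated coefficients becomes zero, so the number of nonzero coefficients strictly decreases; meanwhile the new family still sums to zero and still has balanced positive and negative mass, now equal to $S - c$. A direct computation gives $(\lambda_j - c)\,x_j + (\lambda_k + c)\,x_k = \lambda_j x_j + \lambda_k x_k - c(x_j - x_k)$, so extracting the recorded term reduces the original combination $\sum_i \lambda_i x_i$ exactly to the combination associated with the updated coefficients. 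By the induction hypothesis the reduced family admits a decomposition $\sum_i c_i(x_{j_i} - x_{k_i})$ with all $c_i \geq 0$ and $\sum_i c_i = S - c$; appending $c(x_j - x_k)$ yields the decomposition for the original family and restores the coefficient total to $S$.

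The argument has no genuinely hard part; the only thing to watch is the bookkeeping, so I would foreground the invariant carried through the induction, namely that at every stage the live coefficients sum to zero and their positive mass equals their negative mass, and verify that a single greedy step both preserves this invariant and strictly reduces the count of nonzero coefficients, which guarantees termination. A slightly more conceptual alternative is to phrase the construction as producing a feasible flow in the transportation polytope between $P^+$ and $P^-$ with marginals $(\lambda_j)_{j \in P^+}$ and $(-\lambda_k)_{k \in P^-}$, for instance via the northwest-corner rule; summing such a flow against the differences $x_j - x_k$ gives the same decomposition and makes the equality $\sum_i c_i = S$ transparent from the matching marginals.
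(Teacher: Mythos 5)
Your proof is correct and is essentially the paper's argument: both proceed by induction, peeling off nonnegative multiples of differences $x_j - x_k$ until the zero-sum family is exhausted, with the balanced positive/negative mass giving the coefficient identity $\sum_i c_i = \sum_{\lambda_j \geq 0} \lambda_j$. The only difference is bookkeeping: you cancel one positive against one negative coefficient per step (inducting on the number of nonzero coefficients), whereas the paper sorts the $\lambda_i$ and retires the most negative coefficient against a block of the largest positive ones in each step (inducting on $n$) --- the underlying greedy-matching idea is the same.
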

\begin{proof}
 We prove the result by induction on~$n$.
 Without loss of generality, we may assume that~$\lambda_1 \geq \lambda_2 \geq \cdots \geq \lambda_n$.
 For~$n = 2$, $\lambda_1 + \lambda_2 = 0$ implies that
 $$ \lambda_2 = - \lambda_1 \ \hbox{and} \ \lambda_1 \geq 0 \quad \hbox{therefore} \quad \lambda_1 x_1 + \lambda_2 x_2 = \lambda_1 (x_1 - x_2), $$
 which proves the result for~$n = 2$.
 Now, assume that~$n > 2$.
 Because the~$\lambda_i$'s add up to 0, we have~$\lambda_n \leq 0$. Define
 $$ \lambda_n = - \lambda \quad \hbox{and} \quad i = \min \bigg\{m \in \mathbf{Z^+} : \sum_{k = 1}^m \lambda_k \geq \lambda \bigg \}. $$
 Then, $\lambda_k \geq 0$ for all~$1 \leq k \leq i$ so
 $$ \begin{array}{rcl}
    \displaystyle \sum_{k = 1}^n \,\lambda_k x_k & \n = \n &
    \displaystyle \sum_{k = 1}^{i - 1} \,\lambda_k (x_k - x_n) + \bigg(\lambda - \sum_{k = 1}^{i - 1} \lambda_k \bigg)(x_i - x_n) \vspace*{4pt} \\ && \hspace*{10pt} + \
    \displaystyle \bigg(\sum_{k = 1}^i \,\lambda_k - \lambda \bigg) x_i + \sum_{k = i+1}^{n - 1} \lambda_k x_k. \end{array} $$
 Now, observe that~$\lambda - \sum_{k = 1}^{i - 1} \lambda_k \geq 0$, $\sum_{k = 1}^i \lambda_k - \lambda \geq 0$ and
 $$ \bigg(\sum_{k = 1}^i \,\lambda_k - \lambda \bigg) + \sum_{k = i + 1}^{n - 1} \lambda_k = \sum_{k = 1}^{n - 1} \lambda_k - \lambda = \sum_{k = 1}^{n - 1} \lambda_k + \lambda_n =
    \sum_{k = 1}^n \,\lambda_k = 0. $$
 By the induction hypothesis,
 $$ \begin{array}{rcl}
    \displaystyle \bigg(\sum_{k = 1}^i \,\lambda_k - \lambda \bigg) x_i + \sum_{k = i + 1}^{n - 1} \lambda_k x_k & \n = \n &
    \displaystyle \sum_{\ell, c_\ell \geq 0, j, k \in [n - 1] - [i - 1]} c_\ell \,(x_j - x_k), \vspace*{4pt} \\
    \displaystyle \sum_{\ell} \,c_\ell & \n = \n &
    \displaystyle \bigg(\sum_{k = 1}^i \,\lambda_k - \lambda \bigg) + \sum_{k \in [n - 1] - [i], \lambda_k \geq 0} \lambda_k. \end{array} $$
 Hence,~$\lambda_1 x_1 + \lambda_2 x_2 + \cdots + \lambda_n x_n$ can be written as
 $$ \sum_{\ell, \hat{c}_\ell \geq 0, j, k \in [n]} \hat{c}_\ell (x_j - x_k) $$
 where the sum of the coefficients~$\hat c_\ell$ is given by
 $$ \begin{array}{rcl}
    \displaystyle \sum_\ell \,\hat{c}_\ell & \n = \n &
    \displaystyle \sum_{k = 1}^{i - 1} \,\lambda_k + \bigg(\lambda - \sum_{k = 1}^{i - 1} \,\lambda_k \bigg) + \sum_\ell \,c_\ell =
    \displaystyle \lambda + \sum_\ell \,c_\ell \vspace*{4pt} \\ & \n = \n &
    \displaystyle \lambda + \sum_{k = 1}^i \,\lambda_k - \lambda + \sum_{k \in [n - 1] - [i], \lambda_k \geq 0} \lambda_k =
    \displaystyle \sum_{k \in [n - 1], \lambda_k \geq 0} \lambda_k = \sum_{k \in [n], \lambda_k \geq 0} \lambda_k. \end{array} $$
 This completes the proof.
\end{proof}
This result allows us to observe the interactions among the agents and derive a better upper bound.
 For any~$x, y \in C (\{v_1,...,v_n \})$,
\begin{itemize}
 \item the coefficients of all~$v_i$'s in $x - y$ add up to zero and \vspace*{2pt}
 \item the sum of the positive coefficients of the $v_i$'s in $x - y$ is at most one.
\end{itemize}
 In particular, by Lemma~\ref{L1} and the triangle inequality,
 $$ \|x - y \| \leq \max_{i, j \in [n]} \|v_i - v_j \| \leq \diam (C (\{v_1, \ldots, v_n \})) $$
 therefore~$\diam (C (\{v_1, \ldots, v_n \})) = \max_{i, j \in [n]} \|v_i - v_j \|$.
\begin{lemma}
\label{L2}
 We have
 $$ \diam (C (\{v_1, \ldots, v_n \})) = \max_{i, j \in [n]} \|v_i - v_j \| \ \hbox{for all} \ v_i \in \mathbf{R^d}. $$
\end{lemma}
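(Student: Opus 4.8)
The plan is to establish the two inequalities $\max_{i,j\in[n]}\|v_i-v_j\| \le \diam(C(\{v_1,\ldots,v_n\}))$ and $\diam(C(\{v_1,\ldots,v_n\})) \le \max_{i,j\in[n]}\|v_i-v_j\|$, whose combination gives the desired equality. The first inequality is immediate: since each vertex $v_i$ belongs to $C(\{v_1,\ldots,v_n\})$, every pairwise distance $\|v_i-v_j\|$ is realized between two points of the convex hull, and hence is bounded above by its diameter.

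For the reverse inequality, I would fix arbitrary $x,y \in C(\{v_1,\ldots,v_n\})$ and write them as convex combinations $x=\sum_{i=1}^n \lambda_i v_i$ and $y=\sum_{i=1}^n \mu_i v_i$ with $(\lambda_i)_{i=1}^n$ and $(\mu_i)_{i=1}^n$ stochastic. Then $x-y=\sum_{i=1}^n(\lambda_i-\mu_i)v_i$, and since $\sum_i\lambda_i=\sum_i\mu_i=1$, the coefficients $\lambda_i-\mu_i$ sum to zero. This is exactly the hypothesis of Lemma~\ref{L1}, so I can rewrite $x-y=\sum_{\ell}c_\ell(v_{j_\ell}-v_{k_\ell})$ with all $c_\ell\ge 0$ and $\sum_\ell c_\ell=\sum_{i:\,\lambda_i-\mu_i\ge 0}(\lambda_i-\mu_i)$.

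The key quantitative step is to bound this total coefficient by $1$. Writing $P=\{i:\lambda_i\ge\mu_i\}$, I would observe that $\sum_{i\in P}(\lambda_i-\mu_i)\le\sum_{i\in P}\lambda_i\le\sum_{i=1}^n\lambda_i=1$, using $\mu_i\ge 0$ and the nonnegativity of the remaining $\lambda_i$. With $\sum_\ell c_\ell\le 1$ in hand, the triangle inequality yields $\|x-y\|\le\sum_\ell c_\ell\|v_{j_\ell}-v_{k_\ell}\|\le\bigl(\sum_\ell c_\ell\bigr)\max_{i,j\in[n]}\|v_i-v_j\|\le\max_{i,j\in[n]}\|v_i-v_j\|$. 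Taking the supremum over all $x,y\in C(\{v_1,\ldots,v_n\})$ then gives $\diam(C(\{v_1,\ldots,v_n\}))\le\max_{i,j\in[n]}\|v_i-v_j\|$, which closes the argument.

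I expect no deep obstacle here, since the substantive combinatorial work is already packaged into Lemma~\ref{L1}. The only points requiring care are bookkeeping points: one must confirm that the decomposition produced by Lemma~\ref{L1} involves only differences $v_j-v_k$ of the \emph{original} vertices, so that each term is genuinely controlled by $\max_{i,j}\|v_i-v_j\|$, and one must verify that the sum of positive coefficients is at most $1$. Both facts are exactly the two bullet observations recorded just before the statement, so the proof essentially amounts to assembling them with the triangle inequality.
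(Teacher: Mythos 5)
Your proposal is correct and follows essentially the same route as the paper: the paper proves Lemma~\ref{L2} by exactly this argument, noting that for $x,y\in C(\{v_1,\ldots,v_n\})$ the coefficients of the $v_i$'s in $x-y$ sum to zero with positive part at most one, then applying Lemma~\ref{L1} and the triangle inequality to get $\|x-y\|\le\max_{i,j\in[n]}\|v_i-v_j\|\le\diam(C(\{v_1,\ldots,v_n\}))$. Your write-up merely fills in the bookkeeping (the bound $\sum_{i\in P}(\lambda_i-\mu_i)\le 1$) that the paper leaves implicit.
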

 In contrast with the synchronous HK model, $\mathscr{G} (t)$ $\epsilon$-trivial may not imply that a consensus is reached at the next time step.
 However, $\mathscr{G} (t + 1)$ is again~$\epsilon$-trivial.
 Observe that
 $$ x_i (t + 1) \in C (\{x_1 (t), \ldots, x_n (t) \}) \quad \hbox{for all} \quad i \in [n] $$
 and according to Lemma \ref{L2},
 $$ \max_{i, j \in [n]} \|x_i (t + 1) - x_j (t + 1) \| \leq \max_{i, j \in [n]} \|x_i (t) - x_j (t) \|. $$
\begin{lemma}[$\delta$-trivial-preserving]
\label{L3}
 For any~$\delta > 0$, if
$$ \mathscr{G} (t) \ \hbox{is~$\delta$-trivial}, \ \hbox{then} \ \mathscr{G} (t + 1) \ \hbox{is~$\delta$-trivial}. $$
\end{lemma}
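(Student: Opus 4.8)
The plan is to reduce everything to the diameter-monotonicity fact recorded just before the statement. First I would unwind the definition of $\delta$-triviality: saying that $\mathscr{G}(t)$ is $\delta$-trivial is exactly saying $\max_{i,j \in [n]} \|x_i(t) - x_j(t)\| \le \delta$, and likewise $\mathscr{G}(t+1)$ is $\delta$-trivial precisely when $\max_{i,j \in [n]} \|x_i(t+1) - x_j(t+1)\| \le \delta$. So the lemma is asking exactly that the diameter of the opinion configuration not increase in one step.

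Next I would verify the convexity input. Reading off \eqref{mHK} for a single agent, $x_i(t+1) = \alpha_i(t)\, x_i(t) + (1-\alpha_i(t)) \sum_{j} A_{ij}(t)\, x_j(t)$. Since $\alpha_i(t) \in [0,1]$ and the row $(A_{ij}(t))_{j}$ is stochastic, the coefficients of the $x_j(t)$ in this expression are nonnegative and sum to one; hence $x_i(t+1) \in C(\{x_1(t), \ldots, x_n(t)\})$ for every $i \in [n]$.

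Now I would invoke Lemma~\ref{L2} with $v_k = x_k(t)$ to obtain $\diam(C(\{x_1(t), \ldots, x_n(t)\})) = \max_{k,l \in [n]} \|x_k(t) - x_l(t)\|$. Since both $x_i(t+1)$ and $x_j(t+1)$ lie in this hull, their distance is at most the diameter, so $\|x_i(t+1) - x_j(t+1)\| \le \max_{k,l \in [n]} \|x_k(t) - x_l(t)\| \le \delta$ whenever $\mathscr{G}(t)$ is $\delta$-trivial. Taking the maximum over $i,j$ then shows $\mathscr{G}(t+1)$ is $\delta$-trivial, as required.

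There is no serious obstacle here: the work has been front-loaded into Lemma~\ref{L1} and Lemma~\ref{L2}, which together convert the diameter of a convex hull into a finite maximum over the generating points. The only point that needs care is the convex-combination bookkeeping in the second step — confirming that blending a stubbornness weight $\alpha_i(t)$ with a row-stochastic averaging matrix still yields a genuine convex combination — after which the containment in the hull and the diameter identity supply the conclusion immediately.
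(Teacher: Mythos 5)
Your proof is correct and follows essentially the same route as the paper: the paper's own justification (given in the remarks preceding the lemma) is precisely that $x_i(t+1) \in C(\{x_1(t),\ldots,x_n(t)\})$ together with Lemma~\ref{L2} gives $\max_{i,j\in[n]}\|x_i(t+1)-x_j(t+1)\| \le \max_{i,j\in[n]}\|x_i(t)-x_j(t)\|$. Your explicit verification that the update is a genuine convex combination is a detail the paper leaves implicit, but the argument is identical in substance.
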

 Indeed, we can derive a better upper bound for~$\|x_i (t + 1) - x_j (t + 1) \|$ by re-organizing the terms of~$x_i (t + 1) - x_j (t + 1)$.
\begin{lemma}
\label{L4}
 Assume that~$\mathscr{G} (t)$ is~$\epsilon$-trivial. Then,
 $$ \begin{array}{l}
    \displaystyle \max_{i, j \in [n]} \|x_i (t + 1) - x_j (t + 1) \| \vspace*{0pt} \\ \hspace*{20pt} \leq
    \displaystyle \max_{i, j \in [n], \alpha_i (t) \geq \alpha_j (t)} \bigg(\alpha_i (t) - \frac{\alpha_i (t) - \alpha_j (t)}{n} \bigg) \max_{i, j \in [n]} \|x_i (t) - x_j (t) \|. \end{array} $$
\end{lemma}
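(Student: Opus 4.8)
The plan is to exploit that $\epsilon$-triviality forces the averaging matrix to be rank one. If $\mathscr{G}(t)$ is $\epsilon$-trivial, then $\|x_i(t)-x_j(t)\|\leq\epsilon$ for every pair, so $N_i(t)=[n]$ for all $i$ and hence $A_{ij}(t)=1/n$ for all $i,j$. Consequently \eqref{mHK} collapses to $x_i(t+1)=\alpha_i(t)\,x_i(t)+(1-\alpha_i(t))\,\bar x(t)$, where $\bar x(t)=\frac1n\sum_{k=1}^n x_k(t)$ is the global average. I would fix an arbitrary pair $i,j$ and, using the symmetry of the norm, relabel so that $\alpha_i(t)\geq\alpha_j(t)$; this arranges matters so that the final maximum is naturally taken over ordered pairs with $\alpha_i(t)\geq\alpha_j(t)$, exactly as in the statement.

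Next I would write the difference of the two updates explicitly as a linear combination of the $x_k(t)$'s. Subtracting and substituting $\bar x(t)=\frac1n\sum_k x_k(t)$ gives $x_i(t+1)-x_j(t+1)=\sum_k\mu_k\,x_k(t)$ with coefficients (suppressing the argument $t$) $\mu_i=\alpha_i-\frac{\alpha_i-\alpha_j}{n}$, $\mu_j=-\alpha_j-\frac{\alpha_i-\alpha_j}{n}$, and $\mu_k=-\frac{\alpha_i-\alpha_j}{n}$ for $k\neq i,j$. Two elementary checks then do the real work: first, $\sum_k\mu_k=0$, which is immediate; and second, under the assumption $\alpha_i\geq\alpha_j$ only $\mu_i$ is nonnegative while every other coefficient is $\leq 0$. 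Therefore the sum of the positive coefficients equals $\mu_i=\alpha_i-\frac{\alpha_i-\alpha_j}{n}$.

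With these two facts, Lemma \ref{L1} applies directly to the zero-sum combination $\sum_k\mu_k\,x_k(t)$: it rewrites $x_i(t+1)-x_j(t+1)$ as a sum $\sum_\ell c_\ell(x_p(t)-x_q(t))$ of pairwise differences with nonnegative coefficients $c_\ell$ whose total equals the sum of the positive $\mu_k$'s, namely $\alpha_i-\frac{\alpha_i-\alpha_j}{n}$. Applying the triangle inequality and bounding each $\|x_p(t)-x_q(t)\|$ by $\max_{p,q\in[n]}\|x_p(t)-x_q(t)\|$ then gives $\|x_i(t+1)-x_j(t+1)\|\leq\bigl(\alpha_i-\frac{\alpha_i-\alpha_j}{n}\bigr)\max_{p,q\in[n]}\|x_p(t)-x_q(t)\|$. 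Taking the maximum over all pairs, each represented with its larger-$\alpha$ index first, yields the claimed inequality.

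The main obstacle is the coefficient bookkeeping, not any hard estimate: the naive bound $\|\sum_k\mu_k x_k\|\leq\sum_k|\mu_k|\,\|x_k\|$ produces a strictly larger multiplier, so the sharpness of the stated factor $\alpha_i-\frac{\alpha_i-\alpha_j}{n}$ hinges entirely on recognizing that, once $\alpha_i\geq\alpha_j$, the coefficient $\mu_i$ is the \emph{only} nonnegative one and that Lemma \ref{L1} charges the combination by the sum of positive coefficients rather than by the sum of absolute values. This is precisely the ``better upper bound'' the remark following Lemma \ref{L1} anticipates, and verifying $\mu_i\geq0$ (equivalently $(n-1)\alpha_i+\alpha_j\geq0$) together with the nonpositivity of the remaining coefficients is the one place where the argument must be done with care.
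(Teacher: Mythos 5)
Your proof is correct and follows essentially the same route as the paper's: the same explicit decomposition of $x_i(t+1)-x_j(t+1)$ with coefficients $\mu_i=\alpha_i-\frac{\alpha_i-\alpha_j}{n}$, $\mu_j=-\alpha_j-\frac{\alpha_i-\alpha_j}{n}$, $\mu_k=-\frac{\alpha_i-\alpha_j}{n}$, the same observation that under $\alpha_i\geq\alpha_j$ only $\mu_i$ is nonnegative, and the same application of Lemma~\ref{L1} followed by the triangle inequality and a symmetry/relabeling argument for the ordering of the pair. No gaps; this is the paper's argument.
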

\begin{proof}
 Let $x = x (t)$, $x' = x (t + 1)$ and~$\alpha = \alpha (t)$.
 For any $i, j \in [n]$ with~$\alpha_i \geq \alpha_j$,
 $$ x_i' - x_j' = \bigg(\alpha_i - \frac{\alpha_i - \alpha_j}{n} \bigg) x_i - \bigg(\alpha_j + \frac{\alpha_i - \alpha_j}{n} \bigg) x_j -
                  \frac{\alpha_i - \alpha_j}{n} \sum_{k \in [n] - \{i, j \}} x_k. $$
 Observe that
 $$ \alpha_i - \frac{\alpha_i - \alpha_j}{n} \geq \alpha_i - \frac{\alpha_i}{n} \geq 0, \quad
    \alpha_j + \frac{\alpha_i - \alpha_j}{n} \geq 0 \quad \hbox{and} \quad
    \frac{\alpha_i - \alpha_j}{n} \geq 0, $$
 showing that~$x_i$ is the only term with nonnegative coefficient, whereas the other terms have nonpositive coefficients.
 Because~$x_i'\in C (\{x_1, x_2, \ldots, x_n\})$ for all~$i \in [n]$, it follows from~Lemma~\ref{L1} that
 $$ x_i' - x_j' = \bigg(\alpha_j + \frac{\alpha_i - \alpha_j}{n} \bigg)(x_i - x_j) + \frac{\alpha_i - \alpha_j}{n} \sum_{k \in [n] - \{i, j \}} (x_i - x_k) $$
 and the coefficients of the terms~$x_i - x_k$ for $k \in [n] - \{i\}$ add up to~$\alpha_i - \frac{\alpha_i - \alpha_j}{n}$.
 Thus, by the triangle inequality,
 $$ \begin{array}{rcl}
    \|x_i' - x_j' \| & \n \leq \n &
    \displaystyle \bigg(\alpha_j + \frac{\alpha_i - \alpha_j}{n} \bigg) \|x_i - x_j \| + \frac{\alpha_i - \alpha_j}{n} \sum_{k \in [n] - \{i, j \}} \|x_i - x_k \| \vspace*{4pt} \\ & \n \leq \n & \displaystyle \bigg(\alpha_i-\frac{\alpha_i-\alpha_j}{n} \bigg) \max_{k \in [n] - \{i \}} \|x_i - x_k \| \vspace*{4pt} \\ & \n = \n &
    \displaystyle \bigg(\alpha_i-\frac{\alpha_i-\alpha_j}{n} \bigg) \max_{k \in [n]} \|x_i - x_k \| \vspace*{4pt} \\ & \n \leq \n &
    \displaystyle \max_{i, j \in [n], \alpha_i \geq \alpha_j} \bigg(\alpha_i - \frac{\alpha_i - \alpha_j}{n} \bigg) \max_{i, k \in [n]} \|x_i - x_k \|. \end{array} $$
 If $\alpha_i \leq \alpha_j$, then exchanging the roles of~$i$ and~$j$, we get
 $$ \begin{array}{rcl}
    \|x_j' - x_i' \| & \n \leq \n &
    \displaystyle \max_{j, i \in [n], \alpha_j \geq \alpha_i} \bigg(\alpha_j - \frac{\alpha_j - \alpha_i}{n} \bigg) \max_{j, k \in [n]} \|x_j - x_k \| \vspace*{4pt} \\ & \n = \n &
    \displaystyle \max_{i, j \in [n], \alpha_i \geq \alpha_j} \bigg(\alpha_i - \frac{\alpha_i - \alpha_j}{n} \bigg) \max_{i, k \in [n]} \|x_i - x_k \|. \end{array} $$
 In conclusion,
 $$ \max_{i, j \in [n]} \|x_i' - x_j' \| \leq \max_{i, j \in [n], \alpha_i \geq \alpha_j} \bigg(\alpha_i - \frac{\alpha_i - \alpha_j}{n} \bigg) \max_{i, k \in [n]} \|x_i - x_k \|. $$
 This completes the proof.
\end{proof}

 Observe that
 $$ \beta_t := \max_{i, j \in [n], \alpha_i (t) \geq \alpha_j (t)} \bigg(\alpha_i (t) - \frac{\alpha_i (t) - \alpha_j (t)}{n} \bigg) \leq 1. $$
 Therefore, $\mathscr{G} (t)$ $\epsilon$-trivial implies~$\mathscr{G} (s)$ $\epsilon$-trivial for all $s\geq t$. Hence,
 $$ \max_{i, j \in [n]} \|x_i (s + 1) - x_j (s + 1) \| \leq \beta_s \max_{i, j \in [n]} \|x_i (s) - x_j (s) \| \quad \hbox{for all} \quad s\geq t. $$
\begin{theorem}
\label{T1}
 Assume that~$\limsup_{t \to \infty} \beta_t < 1$ and that~$\mathscr{G} (t)$ is~$\epsilon$-trivial. Then,
 $$ \lim_{t \to \infty} \max_{i, j \in [n]} \|x_i (t) - x_j (t) \| = 0. $$
\end{theorem}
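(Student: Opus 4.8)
The plan is to track the single scalar quantity $D(s) := \max_{i, j \in [n]} \|x_i(s) - x_j(s)\|$, which by Lemma~\ref{L2} is exactly the diameter of $C(\{x_1(s), \ldots, x_n(s)\})$, and to show that it decays geometrically once time is large. Both ingredients needed for this are already assembled in the excerpt. First, because every $\beta_s \leq 1$, the hypothesis that $\mathscr{G}(t)$ is $\epsilon$-trivial propagates forward by Lemma~\ref{L3} (with $\delta = \epsilon$), so that $\mathscr{G}(s)$ is $\epsilon$-trivial for \emph{every} $s \geq t$. Second, Lemma~\ref{L4} then applies at each such step and yields the one-step contraction $D(s+1) \leq \beta_s \, D(s)$ for all $s \geq t$.

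Next I would convert the asymptotic hypothesis into a uniform bound. Since $\limsup_{s \to \infty} \beta_s < 1$, there exist a constant $\beta \in (0, 1)$ and an index $T \geq t$ such that $\beta_s \leq \beta$ for every $s \geq T$. Iterating the one-step estimate from time $T$ then gives, for every integer $k \geq 0$,
$$ D(T + k) \leq \beta_{T + k - 1} \cdots \beta_T \, D(T) \leq \beta^k \, D(T). $$
Because $0 < \beta < 1$, the right-hand side tends to $0$ as $k \to \infty$. As time is discrete with $t \in \mathbf{N}$ and the set $\{T + k : k \geq 0\}$ exhausts all integers $\geq T$, this is already the full statement $\lim_{s \to \infty} D(s) = 0$, which is the desired conclusion.

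The argument is in essence a geometric-decay estimate, so I do not expect a serious obstacle. The only point that requires care is the order in which the earlier results are invoked: one must first record that $\epsilon$-triviality is preserved for all later times, since otherwise Lemma~\ref{L4} could not be applied repeatedly to produce the chain of contractions, and only then extract the eventual uniform factor $\beta$ from the $\limsup$ hypothesis. Once these two facts are lined up, the telescoping bound and the discreteness of time deliver the limit directly, with no need to pass through subsequences or to invoke the monotonicity of $D$ separately.
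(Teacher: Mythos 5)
Your proposal is correct and takes essentially the same route as the paper: the same quantity $d_s=\max_{i,j\in[n]}\|x_i(s)-x_j(s)\|$, the same forward propagation of $\epsilon$-triviality, and the same one-step contraction $d_{s+1}\leq\beta_s d_s$ from Lemma~\ref{L4}, iterated to force decay. The only difference is cosmetic: you convert $\limsup_{t\to\infty}\beta_t<1$ into a uniform tail bound $\beta_s\leq\beta<1$ for all $s\geq T$ (which is a valid consequence of the limsup hypothesis), whereas the paper only extracts a subsequence $\beta_{t_i}\leq\delta<1$ and uses $\beta_s\leq1$ for the remaining factors --- a slightly weaker use of the hypothesis that explains the paper's later remark that infinitely many $\beta_t$ bounded away from one already suffice for consensus.
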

\begin{proof}
 Define
 $$ d_s = \max_{i, j \in [n]} \|x_i (s) - x_j (s) \|. $$
 According to Lemma \ref{L4},
 $$ \mathscr{G} (t) \ \hbox{$\epsilon$-trivial} \quad \Longrightarrow \quad d_{s + 1} \leq \beta_s d_s \ \hbox{for all} \ s \geq t. $$
 Since $\limsup_{t \to \infty} \beta_t < 1$, there exists~$(t_i)_{i = 1}^{\infty} \subset \mathbf{N}$ strictly increasing with~$t_1 \geq t$ such
 that~$\beta_{t_i} \leq \delta < 1$ for some~$\delta$ and for all~$i \geq 1$.
 For any~$s > t_1$, we have~$t_{i_s} < s\leq t_{i_s + 1}$ for some $i_s \in \mathbf{Z^+}$ therefore
 $$ d_s \leq \beta_{s - 1} \beta_{s - 2} \cdots \beta_{t_1} d_{t_1} \leq \delta^{i_s} d_{t_1}. $$
 As $s \to \infty$, $i_s \to \infty$. Thus,
 $$ \limsup_{s \to \infty} d_s \leq 0, $$
 showing that the limit exists.
 This completes the proof.
\end{proof}

 In an~$\epsilon$-trivial profile, agents need not be open-minded all the time.
 As long as there are infinitely many~$\beta_t$ with an upper bound less than one, eventually will the population reach a consensus.
 The next theorem shows that, even though the profile is not~$\epsilon$-trivial, still can the agents' opinions converge.
\begin{theorem}
\label{T2}
 Define~$d_t^i = \max_{j \in N_i (t)} \|x_i (t) - x_j (t) \|$. If
 $$ \sum_{t = 0}^{\infty} \ (1 - \alpha_i (t)) \bigg(1 - \frac{1}{|N_i (t)|} \bigg) d_t^i < \infty, \ \hbox{then} \ x_i (t) \to x_i \in \mathbf{R^d} \ \hbox{as} \ t \to \infty. $$
\end{theorem}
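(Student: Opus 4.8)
The plan is to show that the summability hypothesis forces the trajectory of agent~$i$ to have finite total length, which makes $(x_i(t))_{t \geq 0}$ a Cauchy sequence in the complete space~$\mathbf{R^d}$ and hence convergent. The whole argument rests on a single clean estimate: the one-step displacement $\|x_i(t+1) - x_i(t)\|$ is bounded by exactly the general term of the series in the hypothesis. To obtain it, I would first read off the $i$-th row of~\eqref{mHK} and subtract $x_i(t)$, which gives
$$ x_i(t+1) - x_i(t) = (1 - \alpha_i(t)) \bigg( \frac{1}{|N_i(t)|} \sum_{j \in N_i(t)} x_j(t) - x_i(t) \bigg). $$
The key bookkeeping point is that $i \in N_i(t)$, so $x_i(t) = \frac{1}{|N_i(t)|} \sum_{j \in N_i(t)} x_i(t)$, and the bracket collapses to $\frac{1}{|N_i(t)|} \sum_{j \in N_i(t)} (x_j(t) - x_i(t))$; the $j = i$ summand vanishes, leaving a sum over $N_i(t) - \{i\}$, which has $|N_i(t)| - 1$ terms.

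Next I would apply the triangle inequality and the definition $d_t^i = \max_{j \in N_i(t)} \|x_i(t) - x_j(t)\|$, bounding each difference $\|x_j(t) - x_i(t)\| \leq d_t^i$. Since there are exactly $|N_i(t)| - 1$ nonzero terms, this yields
$$ \|x_i(t+1) - x_i(t)\| \leq (1 - \alpha_i(t)) \,\frac{|N_i(t)| - 1}{|N_i(t)|}\, d_t^i = (1 - \alpha_i(t)) \bigg(1 - \frac{1}{|N_i(t)|} \bigg) d_t^i, $$
which is precisely the summand appearing in the hypothesis. Summing over $t$ therefore gives $\sum_{t = 0}^\infty \|x_i(t+1) - x_i(t)\| < \infty$. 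For any $s > t$, the triangle inequality bounds $\|x_i(s) - x_i(t)\| \leq \sum_{\tau = t}^{s - 1} \|x_i(\tau + 1) - x_i(\tau)\|$, and the right-hand side is a tail of a convergent series, so it tends to~$0$ as $t \to \infty$. Hence $(x_i(t))$ is Cauchy in $\mathbf{R^d}$ and converges to some limit $x_i \in \mathbf{R^d}$, as claimed.

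I do not expect a serious obstacle in this argument; it is essentially a telescoping/absolute-convergence estimate. The only delicate point is the very first step, namely recognizing that $i \in N_i(t)$ so that the self-difference drops out, leaving exactly $|N_i(t)| - 1$ nonzero terms and producing the factor $1 - 1/|N_i(t)|$ rather than a crude factor of~$1$; it is this sharper combinatorial count that makes the displacement bound match the hypothesis exactly.
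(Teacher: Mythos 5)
Your proof is correct and follows essentially the same route as the paper: both bound the one-step displacement $\|x_i(t+1)-x_i(t)\|$ by $(1-\alpha_i(t))\bigl(1-\tfrac{1}{|N_i(t)|}\bigr)d_t^i$ (the paper invokes its Lemma~1 to reorganize the difference, while you obtain the same decomposition by directly cancelling the $j=i$ term), then conclude summability, the Cauchy property, and convergence in $\mathbf{R^d}$.
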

\begin{proof}
 By Lemma \ref{L1} and the triangle inequality,
 $$ \begin{array}{l}
    \|x_i (t) - x_i (t + 1) \| =
    \displaystyle \|(1 - \alpha_i (t)) \bigg(1 - \frac{1}{|N_i (t)|} \bigg) x_i (t) - \frac{1 - \alpha_i (t)}{|N_i(t)|} \sum_{j \in N_i (t) -\{ i\}} x_j (t) \| \vspace*{4pt} \\ \hspace*{25pt} =
    \displaystyle \frac{1 - \alpha_i(t)}{|N_i(t)|} \ \bigg\|\sum_{j \in N_i (t) - \{i\}}[x_i (t) - x_j (t)] \bigg\| \leq
    \displaystyle (1 - \alpha_i (t)) \bigg(1 - \frac{1}{|N_i (t)|} \bigg) d_t^i, \end{array} $$
 from which it follows that
 $$ \sum_{t = 0}^{\infty} \ \|x_i (t) - x_i (t + 1) \| < \infty. $$
 This shows that~$(x_i (t))_{t = 0}^{\infty}$ is a Cauchy sequence in~$\mathbf{R^d}$. Hence,~$x_i (t)$ converges to some $x_i$ in $\mathbf{R^d}$ as $t$ goes to infinity.
 This completes the proof.
\end{proof}

 The assumption of Theorem~\ref{T2} is difficult to check because it depends on the entire dynamics' trajectory.
 However, since~$\alpha_i (t)$ is controllable and
 $$ \bigg(1 - \frac{1}{|N_i(t)|} \bigg) \,d^i_t $$
 is bounded, the assumption holds if the sum of~$1 - \alpha_i (t)$ over time is finite.
 For instance, given~$a > 1$, if
 $$ 1 - \alpha_i (t) = O \bigg(\frac{1}{t^a} \bigg), \  \hbox{then}  \ x_i (t) \ \hbox{converges to some} \ x_i \in \mathbf{R^d} \ \hbox{as} \ t \to \infty. $$
 Next, we study several conditions under which every component of a profile is~$\delta$-trivial in finite time or under which the asymptotic stability holds.
 The following definition and lemmas will lead us to these conditions.
\begin{definition}{\rm
 A symmetric matrix~$M$ is called a \emph{generalized Laplacian} of a graph $G = (V, E)$ if for~$x, y \in V$, the following two conditions hold:
 $$ M_{xy} = 0 \ \hbox{for} \  x \neq y \ \hbox{and} \ xy \notin E \quad \hbox{and} \quad M_{xy} < 0 \ \hbox{for} \ x \neq y \ \hbox{and} \ xy \in E. $$
 Let~$d_G (x)$ = degree of~$x$ in $G$, let~$V (G)$ = vertex set of~$G$, and let~$E (G)$ = edge set of~$G$.
 Then, the \emph{Laplacian} of~$G$ is defined as~$\mathscr{L} = D_G - A_G$ where
 $$ D_G = \diag ((d_G (x))_{x \in V (G)}) \quad \hbox{and} \quad A_G = \ \hbox{the adjacency matrix}. $$
 In particular, $(A_G)_{xy} = \mathbbm{1} \{xy \in E (G)\}$ when the graph~$G$ is simple.}
\end{definition}

 Note that there is no restrictions on the diagonal entries of the matrix~$M$.
 Also, the Laplacian of~$G$ is clearly a generalized Laplacian.
\begin{lemma}[Perron-Frobenius for Laplacians \cite{5}]
\label{L5}
 Assume that~$M$ is a generalized Laplacian of a connected graph.
 Then, the smallest eigenvalue of~$M$ is simple and the corresponding eigenvector can be chosen with all entries positive.
\end{lemma}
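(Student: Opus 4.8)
The plan is to prove both assertions — simplicity of the smallest eigenvalue and positivity of its eigenvector — through the Courant–Fischer variational characterization combined with a sign-flipping argument that exploits the non-positivity of the off-diagonal entries of a generalized Laplacian. Write $V = [n]$. Since $M$ is symmetric, I would start from
$$ \lambda_{\min}(M) = \min_{\|v\| = 1} v' M v, $$
recalling that the minimizers of this Rayleigh quotient are precisely the unit eigenvectors associated with $\lambda_{\min}(M)$.

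First I would produce a non-negative eigenvector. For $v \in \mathbf{R^n}$ write $|v| = (|v_x|)_{x \in V}$ and expand
$$ v' M v = \sum_x M_{xx} v_x^2 + \sum_{x \neq y} M_{xy} v_x v_y. $$
Because $M_{xy} \leq 0$ off the diagonal while $v_x v_y \leq |v_x| |v_y|$, every off-diagonal term obeys $M_{xy} v_x v_y \geq M_{xy} |v_x| |v_y|$, so $|v|' M |v| \leq v' M v$ while $\| |v| \| = \| v \|$. Applying this to a minimizer $v$ shows $|v|$ is also a minimizer, hence $w := |v| \geq 0$ is an eigenvector for $\lambda_{\min}(M)$.

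Next I would upgrade non-negativity to strict positivity using connectedness, and this is the crux of the argument. Suppose $w_x = 0$ at some vertex $x$. Reading the $x$-th coordinate of $M w = \lambda_{\min}(M) w$ and using $w_x = 0$ yields $\sum_{y : xy \in E} M_{xy} w_y = 0$. Each summand has $M_{xy} < 0$ and $w_y \geq 0$, hence is non-positive, so a vanishing sum forces $w_y = 0$ for every neighbor $y$ of $x$; since the graph is connected, propagating this along paths gives $w \equiv 0$ on all of $V$, contradicting $w \neq 0$. Thus $w > 0$ entrywise, which settles the positive-eigenvector claim. For simplicity, the same reasoning applied to an arbitrary eigenvector $u$ of $\lambda_{\min}(M)$ shows $|u| > 0$, i.e. $u$ has no zero coordinate; so if the $\lambda_{\min}(M)$-eigenspace had dimension at least two I could choose linearly independent eigenvectors $u, v$ and form the eigenvector $w := v_1 u - u_1 v$ (with $u_1, v_1$ the first coordinates), which is nonzero by independence yet satisfies $w_1 = 0$ — impossible. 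Hence the eigenspace is one-dimensional.

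I expect the main obstacle to be precisely the second step: the variational sign-flip only delivers a \emph{non-negative} eigenvector, and turning non-negativity into strict positivity is exactly where connectedness must enter. Getting the contrapositive propagation right — a single zero entry forces zeros on all neighbors, then on the whole component — is the decisive point, and it is also what feeds the simplicity argument, since that step relies on every $\lambda_{\min}(M)$-eigenvector having all coordinates nonzero.
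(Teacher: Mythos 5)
Your proof is correct. The paper does not prove this lemma at all --- it is stated as a known result imported from the cited reference \cite{5} --- so there is no internal proof to compare against; your argument (the Rayleigh-quotient sign-flip $v \mapsto |v|$ to produce a nonnegative minimizer, zero-propagation along edges of the connected graph to upgrade nonnegativity to strict positivity, and the vanishing-coordinate combination $w = v_1 u - u_1 v$ to rule out a two-dimensional eigenspace) is the standard proof of this Perron--Frobenius statement and is complete and valid as written.
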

\begin{lemma}[Courant-Fischer Formula \cite{7}]
\label{L6}
 Assume that~$Q$ is a symmetric matrix with eigenvalues~$\lambda_1 \leq \lambda_2 \leq \cdots \leq \lambda_n$ and corresponding eigenvectors~$v_1, v_2, \ldots, v_n$.
 Let~$S_k$ be the vector space generated by~$v_1, v_2, \ldots, v_k$ and~$S_0 = \{0 \}$. Then,
 $$ \lambda_k = \min \{x' Q x : \|x \| = 1, x \in S_{k - 1}^{\perp} \}. $$
\end{lemma}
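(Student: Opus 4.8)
The plan is to reduce the identity to a direct computation in the eigenbasis. Since $Q$ is symmetric, the spectral theorem guarantees that the eigenvectors $v_1, \ldots, v_n$ can be taken to form an orthonormal basis of $\mathbf{R^n}$, so I would begin by fixing such a choice; this is automatic when the $\lambda_i$ are distinct, and always possible by selecting orthonormal bases within each eigenspace. With orthonormality in hand, the key structural fact is that $S_{k-1} = \Span(v_1, \ldots, v_{k-1})$ has orthogonal complement $S_{k-1}^{\perp} = \Span(v_k, v_{k+1}, \ldots, v_n)$, and it is precisely this identification that drives the whole argument.

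Next I would establish the lower bound. Any unit vector $x \in S_{k-1}^{\perp}$ can be expanded as $x = \sum_{i = k}^n c_i v_i$ with $\sum_{i = k}^n c_i^2 = \|x\|^2 = 1$. Using $Q v_i = \lambda_i v_i$ together with the orthonormality of the $v_i$, the Rayleigh quotient collapses to $x' Q x = \sum_{i = k}^n \lambda_i c_i^2$. Because every index appearing in this sum satisfies $i \geq k$, we have $\lambda_i \geq \lambda_k$, whence $x' Q x \geq \lambda_k \sum_{i = k}^n c_i^2 = \lambda_k$. This yields $\min \{x' Q x : \|x\| = 1, x \in S_{k-1}^{\perp}\} \geq \lambda_k$.

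To see that the minimum is attained and equals $\lambda_k$, I would simply test the candidate $x = v_k$: it lies in $S_{k-1}^{\perp}$, has unit norm, and gives $v_k' Q v_k = \lambda_k \,v_k' v_k = \lambda_k$. Combining this with the lower bound from the previous step produces the claimed equality.

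I do not anticipate a genuine obstacle here; the sole point requiring care is the orthonormality of the eigenbasis when eigenvalues are repeated, since the statement refers to ``the corresponding eigenvectors'' as though they were canonically determined. The clean way to sidestep this is to invoke the spectral theorem at the very outset and work throughout with one fixed orthonormal eigenbasis, after which the identification $S_{k-1}^{\perp} = \Span(v_k, \ldots, v_n)$ and the Rayleigh-quotient calculation are entirely mechanical.
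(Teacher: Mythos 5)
Your proof is correct, but note that the paper does not actually prove this lemma: it is quoted as a known result from Horn and Johnson \cite{7}, so there is no internal argument to compare yours against. Your route --- fix an orthonormal eigenbasis via the spectral theorem, identify $S_{k-1}^{\perp}=\Span(\{v_k,\ldots,v_n\})$, expand the Rayleigh quotient, and exhibit $v_k$ as a minimizer --- is the standard textbook proof and is sound. The one caveat is the point you yourself raise: as stated, the lemma takes the eigenvectors as given, and when eigenvalues repeat, a given linearly independent set of eigenvectors need not be pairwise orthogonal, so the identification $S_{k-1}^{\perp}=\Span(\{v_k,\ldots,v_n\})$ can fail for that particular set even though the conclusion still holds. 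The conclusion survives because eigenvectors for distinct eigenvalues of a symmetric matrix are automatically orthogonal and the given eigenvectors with eigenvalue strictly below $\lambda_k$ span the direct sum of the corresponding eigenspaces (by counting multiplicities), so any unit $x\in S_{k-1}^{\perp}$ satisfies $x'Qx\geq\lambda_k$; attainment follows by a dimension count inside the $\lambda_k$-eigenspace, since at most $\dim E_{\lambda_k}-1$ of the vectors $v_1,\ldots,v_{k-1}$ lie in that eigenspace. For the paper's purposes your version is entirely sufficient: the lemma is invoked only in Lemma \ref{L10}, with $k=2$ and $Q'Q$ in place of $Q$, where the smallest eigenvalue $0$ is simple with eigenvector $\mathbbm{1}$, so $S_1=\Span(\{\mathbbm{1}\})$ is canonically determined and the ambiguity never arises.
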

\begin{lemma}[Cheeger's Inequality \cite{6}]
\label{L7}
 Assume that~$G = (V, E)$ is an undirected graph with the Laplacian~$\mathscr{L}$. Define
 $$ i (G) = \min \bigg\{\frac{|\partial S|}{|S|} : S \subset V, 0 < |S| \leq \frac{|G|}{2} \bigg\} $$
 where~$\partial S = \{uv \in E : u \in S, v \in S^c\}$. Then,
 $$ 2i (G) \geq \lambda_2 (\mathscr{L}) \geq \frac{i^2(G)}{2 \Delta (G)} \quad \hbox{where} \quad \Delta (G) = \ \hbox{maximum degree of~$G$}. $$
\end{lemma}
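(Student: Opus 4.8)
The plan is to read both inequalities off the Rayleigh-quotient description of $\lambda_2(\mathscr{L})$. First I would record the fundamental identity for the Laplacian of $G=(V,E)$, namely $x'\mathscr{L}x=\sum_{uv\in E}(x_u-x_v)^2$ for every $x\in\mathbf{R}^{|V|}$, which is immediate from $\mathscr{L}=D_G-A_G$. Since $\mathscr{L}\mathbf{1}=0$ and $\mathscr{L}$ is positive semidefinite, the smallest eigenvalue is $\lambda_1=0$; assuming $G$ is connected (the disconnected case being trivial, as then a small component forces $i(G)=\lambda_2=0$), Lemma~\ref{L5} guarantees this eigenvalue is simple with eigenvector a positive multiple of $\mathbf{1}$, so $S_1=\Span\{\mathbf{1}\}$. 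Lemma~\ref{L6} then yields
$$ \lambda_2(\mathscr{L})=\min\bigg\{\frac{\sum_{uv\in E}(x_u-x_v)^2}{\sum_{v}x_v^2}:\ x\neq0,\ \sum_v x_v=0\bigg\}, $$
and every step below is a matter of feeding good test vectors into this formula or extracting information from its minimizer.

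For the upper bound $2i(G)\geq\lambda_2(\mathscr{L})$ I would let $S$ attain $i(G)$ and use the centered indicator $x=\mathbbm{1}_S-\tfrac{|S|}{n}\mathbf{1}$, where $n=|V|$; this satisfies $\sum_v x_v=0$. The numerator collapses to $|\partial S|$ because $x_u-x_v$ vanishes on edges inside $S$ or inside $S^c$ and equals $\pm1$ across the cut, while a short computation gives the denominator $\sum_v x_v^2=|S|(n-|S|)/n$. Hence $\lambda_2\leq n|\partial S|/(|S|(n-|S|))$, and using $|S|\leq n/2$ so that $n/(n-|S|)\leq2$ gives $\lambda_2\leq 2|\partial S|/|S|=2i(G)$.

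The \emph{hard direction} is $\lambda_2(\mathscr{L})\geq i^2(G)/(2\Delta(G))$, and this is where I expect the real work. Starting from a $\lambda_2$-eigenvector $f$ (so $\sum_v f_v=0$), I would first subtract a median value of $f$ to obtain $g$: this leaves the numerator unchanged and, since it only enlarges $\sum_v g_v^2=\sum_v f_v^2+nc^2$, keeps the Rayleigh quotient at most $\lambda_2$, while arranging that both $\{g>0\}$ and $\{g<0\}$ have at most $n/2$ vertices. Writing $g=g_+-g_-$ with $g_\pm\geq0$ supported on disjoint sets of size $\leq n/2$, the edgewise inequality $(g_u-g_v)^2\geq(g_{+,u}-g_{+,v})^2+(g_{-,u}-g_{-,v})^2$ together with $\sum_v g_v^2=\sum_v g_{+,v}^2+\sum_v g_{-,v}^2$ and the mediant inequality reduces the problem to a single nonnegative function $h$ (one of $g_\pm$) supported on at most $n/2$ vertices, for which it suffices to prove $\sum_{uv\in E}(h_u-h_v)^2\geq\tfrac{i^2(G)}{2\Delta(G)}\sum_v h_v^2$.

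The crux is then a co-area estimate for such an $h$. Ordering the vertices as $v_1,\dots,v_n$ so that $h_{v_1}\geq\cdots\geq h_{v_n}$ and telescoping $h_u^2-h_v^2$ across the intermediate level sets gives $\sum_{uv\in E}|h_u^2-h_v^2|=\sum_{k}(h_{v_k}^2-h_{v_{k+1}}^2)\,|\partial\{v_1,\dots,v_k\}|$; since every such level set lies in the support of $h$ and hence has at most $n/2$ vertices, its boundary is at least $i(G)\,k$, and Abel summation collapses the right-hand side to $i(G)\sum_v h_v^2$. Factoring $|h_u^2-h_v^2|=|h_u-h_v|\,|h_u+h_v|$ and applying Cauchy--Schwarz, followed by $\sum_{uv\in E}(h_u+h_v)^2\leq2\sum_u d_G(u)h_u^2\leq2\Delta(G)\sum_v h_v^2$, turns $i(G)\sum_v h_v^2\leq\sqrt{\sum_{uv}(h_u-h_v)^2}\,\sqrt{2\Delta(G)\sum_v h_v^2}$ into the desired bound after squaring. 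The main obstacles are bookkeeping ones: ensuring the median shift genuinely produces supports of size $\leq n/2$, and handling the telescoping and the Cauchy--Schwarz step so that the degree bound $\Delta(G)$ enters exactly once.
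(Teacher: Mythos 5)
Your proof is correct, but there is nothing in the paper to compare it against: the paper states Cheeger's inequality as Lemma~\ref{L7} purely as an imported result, citing the algebraic graph theory text~\cite{6}, and never proves it (it only \emph{uses} it, inside the proof of Theorem~\ref{T3}, to lower-bound $\lambda_2(\mathscr{L})$ by $i^2(\mathscr{G}(t))/(2\Delta(\mathscr{G}(t)))$). What you have written is essentially the standard textbook argument, and it checks out: the easy direction via the centered indicator $\mathbbm{1}_S-\tfrac{|S|}{n}\mathbf{1}$ with denominator $|S|(n-|S|)/n$ and the bound $n/(n-|S|)\leq 2$; and the hard direction via the median shift (which increases the denominator by $nc^2$ while fixing the numerator, and guarantees both sign-supports have size at most $n/2$), the edgewise inequality $(g_u-g_v)^2\geq(g_{+,u}-g_{+,v})^2+(g_{-,u}-g_{-,v})^2$ (which holds because $g_ug_v\leq 0$ on mixed-sign edges), the mediant inequality, the co-area/telescoping identity $\sum_{uv\in E}|h_u^2-h_v^2|=\sum_k(h_{v_k}^2-h_{v_{k+1}}^2)|\partial\{v_1,\dots,v_k\}|\geq i(G)\sum_v h_v^2$, and Cauchy--Schwarz with $\sum_{uv\in E}(h_u+h_v)^2\leq 2\Delta(G)\sum_v h_v^2$. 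The only loose ends are the ones you already flag as bookkeeping: the degenerate case where $g_+$ or $g_-$ vanishes identically (then work directly with the nonzero part, whose support still has size at most $n/2$), and the disconnected case, which you correctly dispose of since then $i(G)=\lambda_2=0$. So your proposal supplies a self-contained proof of a result the paper treats as a black box; that is a legitimate strengthening of the exposition, though for the paper's purposes the citation suffices.
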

\begin{lemma}\label{NL8}
 Let~$Z(t)=\sum_{i,j\in[n]}\|x_i(t)-x_j(t)\|^2\wedge\epsilon^2.$ Then,~$Z$ is nonincreasing with respect to~$t$. In particular,
$$\begin{array}{rcl}
    \displaystyle Z(t)-Z(t+1)&\n \geq\n &\displaystyle 4\sum_{i=1}^n \bigg(1+|N_i(t)|\frac{\alpha_i(t)}{1-\alpha_i(t)}\mathbbm{1}\{\alpha_i(t)<1\}\bigg)\|x_i(t)-x_i(t+1)\|^2.
\end{array}$$
\end{lemma}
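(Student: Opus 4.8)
The plan is to analyze the quantity $Z(t)$ term-by-term by expanding the squared distances and exploiting the structure of the update rule~\eqref{mHK}. First I would write $x_i(t+1)-x_i(t) = (1-\alpha_i(t))\bigl(\frac{1}{|N_i(t)|}\sum_{j\in N_i(t)} x_j(t) - x_i(t)\bigr)$, so that each agent's displacement is $(1-\alpha_i(t))$ times the ``drift toward the neighbor-average.'' The key observation is that the update is an averaging dynamic, and averaging dynamics are known to contract a suitable quadratic Lyapunov functional. The truncation $\|x_i-x_j\|^2\wedge\epsilon^2$ is designed so that only the pairs that are genuinely neighbors (distance $\le\epsilon$) contribute their true squared distance to the decrease, while pairs outside the confidence bound are capped and do not sabotage monotonicity even if they drift slightly.

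The main steps, in order, would be: (i) Fix $t$ and compare $Z(t)$ with $Z(t+1)$ by pairing up the summands. For a pair $i,j$ I would split into cases according to whether $\|x_i(t)-x_j(t)\|$ and $\|x_i(t+1)-x_j(t+1)\|$ exceed $\epsilon$, using that the min of the two is always at least the true distance minus the cap. (ii) Reduce the problem to controlling $\sum_{i,j}\bigl(\|x_i(t)-x_j(t)\|^2 - \|x_i(t+1)-x_j(t+1)\|^2\bigr)$ restricted to active pairs, and expand each difference as
$$
\|x_i(t)-x_j(t)\|^2 - \|x_i(t+1)-x_j(t+1)\|^2,
$$
writing $x_\bullet(t+1)=x_\bullet(t)+\delta_\bullet$ with $\delta_\bullet = x_\bullet(t+1)-x_\bullet(t)$. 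This produces cross terms of the form $\langle x_i(t)-x_j(t),\,\delta_i-\delta_j\rangle$ plus the negative quadratic $-\|\delta_i-\delta_j\|^2$. (iii) Sum over all pairs and reorganize the cross terms so that, for each fixed $i$, the contribution reassembles into $\langle\,\cdot\,,\delta_i\rangle$ against the aggregate drift; because $\delta_i$ is itself proportional to the drift toward the average, these inner products become nonnegative and in fact dominate a multiple of $\|\delta_i\|^2$. This is where the factor $\bigl(1+|N_i(t)|\frac{\alpha_i(t)}{1-\alpha_i(t)}\bigr)$ should emerge: the ``$1$'' comes from the intrinsic contraction of the averaging step, and the stubbornness-dependent term comes from re-expressing $\delta_i$ in terms of $\alpha_i(t)$ and the neighbor count, so that $\|x_i(t)-x_i(t+1)\|^2$ carries an amplified coefficient when $\alpha_i(t)>0$.

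The hard part will be step (iii): correctly bookkeeping the cross terms so that the off-diagonal inner products collapse into the displacements $\delta_i$ and produce exactly the stated coefficient, rather than a weaker one. The truncation interacts delicately here, since a pair leaving the confidence ball must be handled by a separate estimate showing its capped contribution cannot increase $Z$; I would treat the $\alpha_i(t)=1$ case separately (where $\delta_i=0$ and the indicator switches off, so that agent contributes nothing and the bound is trivially consistent) to justify the indicator $\mathbbm{1}\{\alpha_i(t)<1\}$. Once the per-agent inner product is lower-bounded by the displacement norm squared with the correct multiplier, summing over $i\in[n]$ and reintroducing the factor $4$ (arising from each unordered pair being counted in both orders, together with the algebraic identity for the quadratic expansion) yields the claimed inequality, and nonincreasingness of $Z$ follows immediately since the right-hand side is a sum of nonnegative terms.
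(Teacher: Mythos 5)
Your plan follows the same general route as the paper's proof: decompose $Z(t)-Z(t+1)$ according to which pairs are neighbors before and/or after the update, reduce to the untruncated differences over pairs $j\in N_i(t)$, and expand via the displacements $\delta_i=x_i(t+1)-x_i(t)$. However, what you have written is a roadmap, not a proof, and the part you defer --- your step (iii) --- is precisely where all the mathematical content lies; moreover, the way you describe that step is incorrect. You claim that after reorganizing, the cross inner products ``become nonnegative and in fact dominate a multiple of $\|\delta_i\|^2$.'' This is false for the off-diagonal terms: the quantities $\langle\delta_i,\delta_j\rangle$ for $i\neq j$ have no sign in general. In the paper's argument these are handled by Cauchy--Schwarz, $\langle\delta_i,\delta_j\rangle\geq-\|\delta_i\|\|\delta_j\|$, followed by the absorption $2\|\delta_i\|\|\delta_j\|\leq\|\delta_i\|^2+\|\delta_j\|^2$, and the resulting negative contributions $\sum_i(|N_i(t)|-1)\|\delta_i\|^2$ (counted twice, by symmetry of the double sum) must be cancelled against the positive diagonal terms $|N_i(t)|\|\delta_i\|^2$ that the expansion produces; the coefficient $4\bigl(1+|N_i(t)|\tfrac{\alpha_i(t)}{1-\alpha_i(t)}\bigr)$ is exactly what survives this cancellation, not an ``intrinsic contraction'' constant plus a separately derived stubbornness term.

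The second concrete omission is the mechanism by which $\tfrac{\alpha_i(t)}{1-\alpha_i(t)}$ enters. The only inner product in the expansion with a guaranteed sign is the self-alignment term: writing $x_i'$ for the neighbor average, the update gives $x_i(t+1)-x_i' = \tfrac{\alpha_i(t)}{1-\alpha_i(t)}\,\bigl(x_i(t)-x_i(t+1)\bigr)$ when $\alpha_i(t)<1$, hence
$$
\bigl\langle x_i(t)-x_i(t+1),\,x_i(t+1)-x_i'\bigr\rangle
=\frac{\alpha_i(t)}{1-\alpha_i(t)}\,\|x_i(t)-x_i(t+1)\|^2\;\geq\;0,
$$
and summing $x_i(t+1)-x_j$ over $j\in N_i(t)$ is what produces the factor $|N_i(t)|$ in front of it. Your sketch attributes the factor to ``re-expressing $\delta_i$ in terms of $\alpha_i(t)$ and the neighbor count,'' which does not identify this identity and would not, as stated, produce the claimed coefficient. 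Likewise your handling of the truncation (``the min of the two is always at least the true distance minus the cap'') is not the inequality actually needed: the correct observations are that pairs leaving the confidence ball satisfy $\|x_i(t)-x_j(t)\|^2-\epsilon^2\geq\|x_i(t)-x_j(t)\|^2-\|x_i(t+1)-x_j(t+1)\|^2$ (since the new distance exceeds $\epsilon$), while pairs entering it contribute $\epsilon^2-\|x_i(t+1)-x_j(t+1)\|^2\geq 0$ and may be discarded. Until the bookkeeping of step (iii) is carried out with these corrections, the proposal does not establish the stated inequality.
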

\begin{proof}
Let~$N_i=N_i(t)$, $N_i^*=N_i(t+1)$, $\alpha=\alpha(t)$, $x=x(t)$, $x^*=x(t+1)$ and $x_i'=\frac{1}{|N_i|}\sum_{k\in N_i}x_k$ for all~$i\in[n]$. Via the Cauchy-Schwarz inequality, we obtain \allowdisplaybreaks
\begin{align*}
    & Z(t)-Z(t+1)  = \sum_{i,j\in[n]}(\|x_i-x_j\|^2\wedge\epsilon^2-\|x_i^*-x_j^*\|^2\wedge\epsilon^2) \\
    &\hspace*{20pt} = \sum_{i=1}^n\bigg[\sum_{j\in N_i\cap N_i^*}(\|x_i-x_j\|^2-\|x_i^*-x_j^*\|^2)+\sum_{j\in N_i-N_i^*}(\|x_i-x_j\|^2-\epsilon^2)\\
    &\hspace*{50pt} + \sum_{j\in N_i^*-N_i}(\epsilon^2-\|x_i^*-x_j^*\|^2)\bigg]\\ &\hspace{20pt}\geq \sum_{i=1}^n\sum_{j\in N_i}(\|x_i-x_j\|^2-\|x_i^*-x_j^*\|^2)\\ &\hspace*{20pt} = \sum_{i=1}^n\sum_{j\in N_i}(\|x_i-x_j\|^2-\|x_i^*-x_j\|^2+\|x_i^*-x_j\|^2-\|x_i^*-x_j^*\|^2)\\
    &\hspace*{20pt} = \sum_{i=1}^n\sum_{j\in N_i}(\|x_i-x_i^*+x_i^*-x_j\|^2-\|x_i^*-x_j\|^2+\|x_i^*-x_j^*+x_j^*-x_j\|^2\\
    &\hspace*{70pt} -\|x_i^*-x_j^*\|^2)\\
    &\hspace*{20pt} =  \sum_{i=1}^n\sum_{j\in N_i}(\|x_i-x_i^*\|^2+2<x_i-x_i^*,x_i^*-x_j>+\|x_j^*-x_j\|^2\\ &\hspace*{70pt}  +2<x_i^*-x_j^*,x_j^*-x_j>)\\
    &\hspace*{20pt} =  \sum_{i=1}^n |N_i|(\|x_i-x_i^*\|^2+2<x_i-x_i^*,x_i^*-x_i'>)+\sum_{j=1}^n\sum_{i\in N_j}\|x_j^*-x_j\|^2\\
    &\hspace*{30pt} +2\sum_{j=1}^n\sum_{i\in N_j}<x_i^*-x_i+x_i-x_j^*,x_j^*-x_j>\\
    &\hspace*{20pt} =  \sum_{i=1}^n |N_i|\bigg(\|x_i-x_i^*\|^2+\frac{2\alpha_i}{1-\alpha_i}\mathbbm{1}\{\alpha_i<1\}\|x_i-x_i^*\|^2\bigg)+\sum_{j=1}^n |N_j|\|x_j^*-x_j\|^2\\ &\hspace*{30pt}  +2\sum_{j=1}^n\sum_{i\in N_j}<x_i^*-x_i,x_j^*-x_j>+2\sum_{j=1}^n\sum_{i\in N_j}<x_i-x_j^*,x_j^*-x_j>\\
    &\hspace*{20pt} = \sum_{i=1}^n |N_i|\bigg(1+\frac{2\alpha_i}{1-\alpha_i}\mathbbm{1}\{\alpha_i<1\}\bigg)\|x_i-x_i^*\|^2+\sum_{i=1}^n|N_i|\|x_i^*-x_i\|^2\\
    &\hspace*{30pt} +2\sum_{j=1}^n<x_j^*-x_j,x_j^*-x_j>+2\sum_{j=1}^n\sum_{i\in N_j-\{j\}}<x_i^*-x_i,x_j^*-x_j>\\
    &\hspace*{30pt} +2\sum_{j=1}^n |N_j|<x_j'-x_j^*,x_j^*-x_j>
    \end{align*}
     \begin{align*}
    &\hspace*{20pt} \geq \sum_{i=1}^n |N_i|\bigg(2+\frac{2\alpha_i}{1-\alpha_i}\mathbbm{1}\{\alpha_i<1\}\bigg)\|x_i-x_i^*\|^2+2\sum_{j=1}^n\|x_j^*-x_j\|^2\\
    &\hspace*{30pt}  -2\sum_{j=1}^n\sum_{i\in N_j-\{j\}}\|x_i^*-x_i\|\|x_j^*-x_j\|+2\sum_{j=1}^n |N_j|\frac{\alpha_j}{1-\alpha_j}\mathbbm{1}\{\alpha_j<1\}\|x_j^*-x_j\|^2\\
    &\hspace*{20pt} = \sum_{i=1}^n |N_i|\bigg(2+\frac{4\alpha_i}{1-\alpha_i}\mathbbm{1}\{\alpha_i<1\}\bigg)\|x_i-x_i^*\|^2+2\sum_{j=1}^n\|x_j^*-x_j\|^2\\
    &\hspace*{30pt}  +\sum_{j=1}^n\sum_{i\in N_j-\{j\}}\bigg[(\|x_i^*-x_i\|-\|x_j^*-x_j\|)^2-\|x_i^*-x_i\|^2-\|x_j^*-x_j\|^2\bigg]\\
    &\hspace*{20pt} \geq \ \sum_{i=1}^n |N_i|\bigg(2+\frac{4\alpha_i}{1-\alpha_i}\mathbbm{1}\{\alpha_i<1\}\bigg)\|x_i-x_i^*\|^2+2\sum_{j=1}^n\|x_j^*-x_j\|^2\\
    &\hspace*{30pt} -\sum_{i=1}^n\sum_{j\in N_i-\{i\}}\|x_i^*-x_i\|^2-\sum_{j=1}^n\sum_{i\in N_j-\{j\}}\|x_j^*-x_j\|^2\\
    &\hspace*{20pt} =  \sum_{i=1}^n |N_i|\bigg(2+\frac{4\alpha_i}{1-\alpha_i}\mathbbm{1}\{\alpha_i<1\}\bigg)\|x_i-x_i^*\|^2+2\sum_{j=1}^n\|x_j^*-x_j\|^2\\
    &\hspace*{30pt} -\sum_{i=1}^n(|N_i|-1)\|x_i^*-x_i\|^2-\sum_{j=1}^n(|N_j|-1)\|x_j^*-x_j\|^2\\
    &\hspace*{20pt}  =  \sum_{i=1}^n 4 \bigg(1+|N_i|\frac{\alpha_i}{1-\alpha_i}\mathbbm{1}\{\alpha_i<1\}\bigg)\|x_i-x_i^*\|^2
\end{align*}
This completes the proof.
\end{proof}
\begin{lemma}
\label{L10}
 Assume that~$Q$ is a real square matrix and that~$V$ is invertible such that the matrix~$VQ = \mathscr{L}$ is the Laplacian of some connected graph.
 Then, 0 is a simple eigenvalue of $Q'Q$ corresponding to the eigenvector~$\mathbbm{1} = (1, 1,\ldots, 1)'$.
 In particular, we have
 $$ \lambda_2 (Q'Q) = \min \{x'Q'Qx : \|x \| = 1 \ \hbox{and} \ x \perp \mathbbm{1} \}. $$
\end{lemma}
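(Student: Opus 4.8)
The plan is to reduce both assertions—that $\mathbbm 1$ spans a one–dimensional eigenspace for the eigenvalue $0$, and the variational formula for $\lambda_2$—to standard facts about kernels, and then to invoke the two spectral lemmas already recorded above. First I would record the two structural observations that make the whole argument go through. Since $x'Q'Qx=\|Qx\|^2\ge 0$ for every $x$, the matrix $Q'Q$ is symmetric and positive semidefinite, so its eigenvalues are real and nonnegative and the value $0$ is attained exactly on $\ker(Q'Q)$. Moreover $\ker(Q'Q)=\ker Q$: the inclusion $\ker Q\subseteq\ker(Q'Q)$ is immediate, and conversely $Q'Qx=0$ forces $\|Qx\|^2=x'Q'Qx=0$, hence $Qx=0$. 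Thus the entire problem is about identifying $\ker Q$.

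Next I would transfer this kernel to the Laplacian. Because a Laplacian has zero row sums, $\mathscr L\mathbbm 1=0$; since $VQ=\mathscr L$ with $V$ invertible, we get $Q\mathbbm 1=V^{-1}\mathscr L\mathbbm 1=0$, and therefore $Q'Q\mathbbm 1=0$, which already shows that $\mathbbm 1$ is an eigenvector of $Q'Q$ for the eigenvalue $0$. For the dimension count I would use invertibility of $V$ once more: $Qx=0\iff VQx=0\iff\mathscr L x=0$, so $\ker Q=\ker\mathscr L$. Chaining the identities gives $\ker(Q'Q)=\ker Q=\ker\mathscr L$.

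It then remains to show $\dim\ker\mathscr L=1$, i.e.\ $\ker\mathscr L=\Span\{\mathbbm 1\}$. Here I would note that $\mathscr L$ is a generalized Laplacian of the given connected graph and is positive semidefinite, since $x'\mathscr L x=\sum_{uv\in E}(x_u-x_v)^2\ge 0$; because this form vanishes at $\mathbbm 1$, the smallest eigenvalue of $\mathscr L$ is exactly $0$. By Lemma~\ref{L5} (Perron--Frobenius for Laplacians) the smallest eigenvalue of a generalized Laplacian of a connected graph is simple, so the $0$-eigenspace of $\mathscr L$ is one-dimensional, hence equal to $\Span\{\mathbbm 1\}$. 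Combined with the previous paragraph, $\ker(Q'Q)=\Span\{\mathbbm 1\}$, so $0$ is a simple eigenvalue of $Q'Q$ with eigenvector $\mathbbm 1$, proving the first claim.

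For the ``in particular'' statement I would simply apply the Courant--Fischer formula, Lemma~\ref{L6}, to the symmetric matrix $Q'Q$, whose eigenvalues are $0=\lambda_1<\lambda_2\le\cdots\le\lambda_n$ with $v_1$ proportional to $\mathbbm 1$. Taking $k=2$ gives $S_1=\Span\{v_1\}=\Span\{\mathbbm 1\}$, so $S_1^{\perp}=\{x:x\perp\mathbbm 1\}$ and $\lambda_2(Q'Q)=\min\{x'Q'Qx:\|x\|=1,\ x\perp\mathbbm 1\}$, as required. The only genuinely non-routine point—hence where I expect the main obstacle—is the simplicity of the $0$-eigenvalue: because $Q$ itself need not be symmetric, one cannot apply spectral theory to $Q$ directly, and the argument must be routed through $Q'Q$ for positivity and through the kernel-transfer $\ker Q=\ker\mathscr L$ so that Lemma~\ref{L5} becomes applicable. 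Care is also needed to confirm that $0$ really is the \emph{smallest} eigenvalue of $\mathscr L$ before invoking the simplicity statement.
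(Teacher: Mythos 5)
Your proposal is correct and follows essentially the same route as the paper: both transfer the kernel via $\ker(Q'Q)=\ker Q=\ker\mathscr{L}$ using $\|Qx\|^2=x'Q'Qx$ and the invertibility of $V$, invoke Lemma~\ref{L5} (after noting $\mathscr{L}$ is positive semi-definite with $\mathscr{L}\mathbbm{1}=0$) to get simplicity of the zero eigenvalue, and then apply Lemma~\ref{L6} with $S_1=\Span\{\mathbbm{1}\}$ to obtain the variational formula for $\lambda_2(Q'Q)$. Your write-up is in fact slightly more explicit than the paper's (e.g.\ justifying positive semi-definiteness of $\mathscr{L}$ via the quadratic form and spelling out why $Q\mathbbm{1}=0$), but the underlying argument is identical.
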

\begin{proof}
 To begin with, observe that
 $$ Q'Qx = 0 \ \iff \ Qx = 0 \ \iff \ \mathscr{L}x = 0. $$
 Recall that a real symmetric matrix is diagonalizable, and that its algebraic multiplicity = its geometric multiplicity.
 Since~$\mathscr{L}$ is positive semi-definite and has an eigenvalue 0 corresponding to the eigenvector~$\mathbbm{1}$, by Lemma \ref{L5}, 0 is a simple eigenvalue of~$\mathscr{L}$.
 Hence, by the above relation between~$\mathscr{L}$ and~$Q'Q$, the matrix~$Q'Q$ has a simple eigenvalue 0 corresponding to the eigenvector~$\mathbbm{1}$.
 Since in addition
 $$ x'Q'Qx = \|Qx \|^2 \geq 0, $$
 the matrix~$Q'Q$ is positive semi-definite.
 Finally, applying~Lemma~\ref{L6}, we get
 $$ \lambda_2 (Q'Q) = \min \{x'Q'Qx : \|x \| = 1 \ \hbox{and} \ x \perp \mathbbm{1} \}. $$
 This completes the proof.
\end{proof}

 Now, we are ready to investigate several conditions under which, for any~$\delta > 0$, every component of a profile is~$\delta$-trivial in finite time.
\begin{theorem}
\label{T3}
 Assume that~$\limsup_{t \to \infty} \max_{i \in [n]} \alpha_i (t) < 1$.
 Then, for any~$\delta > 0$, every component of a profile is~$\delta$-trivial in finite time, i.e.,
 $$ \tau_{\alpha, \delta} := \inf \{t \geq 0 : \hbox{every component of $\mathscr{G} (t)$ is $\delta$-trivial} \} < \infty. $$
\end{theorem}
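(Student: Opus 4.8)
The plan is to combine the Lyapunov functional $Z$ of Lemma~\ref{NL8} with a spectral-gap estimate for the Laplacian of each connected component. The functional $Z$ will force every agent to sit arbitrarily close to the average of its neighbors, and Cheeger's inequality will convert this ``almost-harmonicity'' into a uniform diameter bound for each component.

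First I would extract from the hypothesis a constant $\bar\alpha \in (0,1)$ and a time $T_0$ with $\max_i \alpha_i(t) \le \bar\alpha$ for all $t \ge T_0$. Since $Z(t) \ge 0$ and, by Lemma~\ref{NL8}, $Z$ is nonincreasing with
$$ Z(t) - Z(t+1) \ge 4 \sum_{i=1}^n \|x_i(t) - x_i(t+1)\|^2, $$
the sequence $Z(t)$ converges, so $Z(t) - Z(t+1) \to 0$ and hence $\|x_i(t) - x_i(t+1)\| \to 0$ for every $i$. Writing the update~\eqref{mHK} coordinatewise gives $x_i(t+1) - x_i(t) = (1 - \alpha_i(t))\,\eta_i(t)$, where $\eta_i(t) := \frac{1}{|N_i(t)|}\sum_{j \in N_i(t)}(x_j(t) - x_i(t))$. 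For $t \ge T_0$ we have $\|\eta_i(t)\| \le \|x_i(t+1) - x_i(t)\|/(1 - \bar\alpha)$, so $\max_i \|\eta_i(t)\| \to 0$ as $t \to \infty$.

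Next I would fix $t \ge T_0$ and argue component by component. Let $C$ be a connected component of $\mathscr{G}(t)$ and $\mathscr{L}$ the Laplacian of the (connected, simple) induced subgraph on $C$, acting in each coordinate of $\mathbf{R^d}$. Since every graph-neighbor of $i \in C$ lies in $C$, one checks $(\mathscr{L}x)_i = |N_i(t)|\,\eta_i(t)$, so $\|\mathscr{L}x\| \le n^{3/2}\max_i\|\eta_i(t)\|$. Decomposing $x|_C = \bar x\,\mathbbm{1} + w$ with $w \perp \mathbbm{1}$ gives $\mathscr{L}x = \mathscr{L}w$; by Lemma~\ref{L5} the value $0$ is a simple eigenvalue of $\mathscr{L}$ with eigenvector $\mathbbm{1}$, so Courant--Fischer (Lemma~\ref{L6}, or Lemma~\ref{L10} applied to $Q = \diag(|N_i(t)|)^{-1}\mathscr{L}$) yields $\|\mathscr{L}w\| \ge \lambda_2(\mathscr{L})\,\|w\|$. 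Hence $\|w\| \le \lambda_2(\mathscr{L})^{-1}\|\mathscr{L}x\|$, and since $\diam(C) \le 2\max_{i \in C}\|w_i\| \le 2\|w\|$, the diameter of $C$ is controlled by $\max_i\|\eta_i(t)\|$ once $\lambda_2(\mathscr{L})$ is bounded below.

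The crux, and the step I expect to be the main obstacle, is a lower bound on $\lambda_2(\mathscr{L})$ that is uniform over the finitely many connected graphs arising as components. Here I would invoke Cheeger's inequality (Lemma~\ref{L7}): for any connected graph on at most $n$ vertices, any nonempty proper $S$ with $|S| \le |C|/2$ satisfies $|\partial S| \ge 1$, so $i(G) \ge 2/n$, while $\Delta(G) \le n-1$, giving $\lambda_2(\mathscr{L}) \ge i(G)^2/(2\Delta(G)) \ge c_n > 0$ with $c_n$ depending only on $n$. Consequently $\diam(C) \le C_n \max_i\|\eta_i(t)\|$ for a constant $C_n = C_n(n)$, uniformly over all components and all $t \ge T_0$. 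Since $\max_i\|\eta_i(t)\| \to 0$, there is a finite $T_1 \ge T_0$ with $C_n\max_i\|\eta_i(t)\| \le \delta$ for $t \ge T_1$; at any such time every component of $\mathscr{G}(t)$ is $\delta$-trivial, whence $\tau_{\alpha,\delta} \le T_1 < \infty$.
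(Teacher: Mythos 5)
Your proof is correct, and while it rests on the same two pillars as the paper's --- the Lyapunov functional of Lemma~\ref{NL8} and the Cheeger bound $\lambda_2(\mathscr{L}) \geq 2/n^3$ applied after splitting off the $\mathbbm{1}$-direction --- it is packaged differently, and more simply, in two respects. First, the logical structure: the paper argues by contradiction and counting, showing that a $\delta$-nontrivial component forces a one-step movement $\sum_i\|x_i(t)-x_i(t+1)\|^2 > 2\delta^2(1-\max_i\alpha_i(t))^2/n^8$, so that if $\tau_{\alpha,\delta}=\infty$ the infinitely many times $t_k$ with $\max_i\alpha_i(t_k)\leq\gamma<1$ would contribute a divergent sum, contradicting the bound $Z(0)< n^2\epsilon^2$ on total movement; you instead argue directly, using monotonicity and nonnegativity of $Z$ to get $Z(t)-Z(t+1)\to 0$, hence displacements and harmonic defects $\eta_i(t)\to 0$, hence component diameters $\to 0$ --- effectively the contrapositive of the same estimate. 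Second, the spectral step: the paper works with $I-B(t) = (I-\diag(\alpha(t)))(I+D(t))^{-1}\mathscr{L}$ and needs Lemma~\ref{L10} to control $\lambda_2((I-B(t))'(I-B(t)))$, whereas you apply the gap of the component Laplacian directly to $x$ via $(\mathscr{L}x)_i = -|N_i(t)|\,\eta_i(t)$, avoiding Lemma~\ref{L10} altogether; your parenthetical invocation of Lemma~\ref{L10} with $Q=\diag(|N_i(t)|)^{-1}\mathscr{L}$ is the one slightly off note, since that would bound $\|Qw\|$ rather than $\|\mathscr{L}w\|$, but your main route through Lemmas~\ref{L5} and~\ref{L6} is sound. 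Two trade-offs are worth noting: the paper's counting argument only needs a subsequence of good times, so it in fact proves the theorem under the weaker hypothesis $\liminf_{t\to\infty}\max_{i\in[n]}\alpha_i(t)<1$, while your argument genuinely needs the eventual uniform bound $\alpha_i(t)\leq\bar\alpha$; and the counting argument is what yields the explicit, trajectory-independent upper bound on $\tau_{\alpha,\delta}$ exploited in Corollary~\ref{Co1}, which your qualitative limit argument cannot provide because the rate at which $Z(t)-Z(t+1)\to 0$ is not quantified.
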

\begin{proof}
 If every component of~$\mathscr{G} (t)$ is~$\delta$-trivial, we are done.
 Now, assume that~$\mathscr{G} (t)$ has a $\delta$-nontrivial component.
 Without loss of generality, we may assume that~$\mathscr{G} (t)$ is connected;
 if not, we can restrict to a~$\delta$-nontrivial component.
 For~$\mathbbm{1} \in \mathbf{R^n}$ and~$W = \Span(\{\mathbbm{1} \})$, $\mathbf{R^n}=W\oplus W^\perp$.
 Then, write
 $$ x (t) = \left[c_1 \mathbbm{1} \,| \,c_2 \mathbbm{1} \,| \,\cdots \,| \,c_d \mathbbm{1} \right] +
            \left[\hat{c}_1 u^{(1)} \,| \,\hat{c}_2 u^{(2)} \,| \,\cdots \,| \,\hat{c}_d u^{(d)} \right] $$
 where $c_i$ and $\hat{c}_i$ are constants and $u^{(i)} \in \mathbbm{1}^\perp$ is a unit vector for all~$i \in [d]$.
 $$ \hbox{Claim:} \quad \sum_{k = 1}^d \,\hat{c}_k^2 > \frac{\delta^2}{2}. $$
 Assume by contradiction that this is not the case.
 Then, for any~$i, j \in [n]$,
 $$ \begin{array}{l}
    \displaystyle \|x_i (t) - x_j (t)\|^2 =
    \displaystyle \sum_{k = 1}^d \,\hat{c}_k^2 (u^{(k)}_i - u^{(k)}_j)^2 \vspace*{-4pt} \\ \hspace*{40pt} \leq
    \displaystyle \sum_{k = 1}^d \,\hat{c}_k^2 \ 2 ((u^{(k)}_i)^2 + (u^{(k)}_j)^2) \leq 2 \,\sum_{k = 1}^d \,\hat{c}_k^2 \leq \delta^2, \end{array} $$
 contradicting the~$\delta$-nontriviality of~$\mathscr{G}(t)$.
 Let~$B (t) = \diag (\alpha(t)) + (I - \diag (\alpha(t))) A (t)$. Then,
 $$ x (t) - x (t + 1) = (I - B (t)) \,x(t) = \left[\hat{c}_1 (I - B (t)) u^{(1)} \,| \,\cdots \,| \,\hat{c}_d (I - B(t)) u^{(d)} \right],$$
 from which it follows that
 $$ \sum_{i = 1}^n \,\|x_i (t) - x_i (t + 1) \|^2 = \sum_{j = 1}^d \,\hat{c}_j^2 \|(I - B (t)) u^{(j)} \|^2. $$
 Now, observe that
 $$ I - B (t) = (I - \diag (\alpha (t)))(I + D (t))^{-1} \mathscr{L} $$
 where~$\mathscr{L}$ is the Laplacian of~$\mathscr{G} (t)$ and~$D (t)$ is diagonal with~$D_{ii} (t) = d_i (t)$, the degree of vertex~$i$.
 Assume that~$\alpha_i (t) < 1$ for all~$i \in [n]$.
 Then, $I - \diag (\alpha (t))$ is invertible, and according to~Lemmas~\ref{L7} and~\ref{L10},
 $$ \begin{array}{l}
    \displaystyle \|(I - B(t)) u^{(j)} \|^2 =
    \displaystyle u^{(j)'} (I-B(t))' (I-B(t))u^{(j)} \geq
    \displaystyle \lambda_2 ((I-B(t))' (I-B(t))) \vspace*{8pt} \\ \hspace*{25pt} =
    \displaystyle \lambda_2 \bigg(\mathscr{L} \,\diag\bigg(\bigg(\bigg(\frac{1-\alpha_i(t)}{1 + d_i(t)}\bigg)^2 \bigg)_{i = 1}^n \bigg) \mathscr{L} \bigg) \vspace*{8pt} \\ \hspace*{25pt} \geq
    \displaystyle \bigg(\frac{1 - \max_{i \in [n]} \alpha_i (t)}{n} \bigg)^2 \lambda_2 (\mathscr{L}^2) =
    \displaystyle \bigg(\frac{1 - \max_{i \in [n]} \alpha_i (t)}{n} \bigg)^2 \lambda_2^2 (\mathscr{L}) \vspace*{8pt} \\ \hspace*{25pt} >
    \displaystyle  \frac{4 (1 - \max_{i \in [n]} \alpha_i (t))^2}{n^8} \end{array} $$
 where we used that
 $$ \lambda_2 (\mathscr{L}) \geq \frac{i^2 (\mathscr{G}(t))}{2 \Delta(\mathscr{G} (t))} > \frac{(2/n)^2}{2n} = \frac{2}{n^3}. $$
 In particular, we obtain
 $$ \sum_{i = 1}^n \,\|x_i (t) - x_i (t + 1) \|^2 > \frac{2 \delta^2(1 - \max_{i \in [n]} \alpha_i (t))^2}{n^8}. $$
 Since~$\limsup_{t \to \infty} \max_{i \in [n]} \alpha_i (t) <1$, there exists $(t_k)_{k \geq 1} \subset \mathbf{N}$ strictly increasing such that
 $$ \max_{i \in [n]} \alpha_i (t_k) \leq \gamma < 1 \quad \hbox{for some} \quad \gamma \quad \hbox{and for all} \quad k \geq 1. $$
 Now, let~$\tau = \tau_{\alpha, \delta}$.
 By Lemma \ref{NL8}, for all~$m \geq 1$,
\begin{align*}
  n^2 \epsilon^2 \ > \ & Z (0) \geq Z (0) - Z (m) = \sum_{t = 0}^{m - 1} \,(Z (t ) - Z (t+1)) \\
                 \geq \ & \sum_{t = 0}^{m - 1} \sum_{i \in [n], \alpha_i (t) < 1} 4 \bigg(1+|N_i(t)| \frac{\alpha_i(t)}{1 - \alpha_i (t)}  \bigg) \|x_i (t) - x_i (t + 1) \|^2 \\
                 \geq \ & 4 \sum_{t = 0}^{m - 1} \sum_{i \in [n], \alpha_i (t) < 1} \|x_i (t) - x_i (t + 1) \|^2. \label{st}\tag{$\ast$}
\end{align*}
 Now, assume by contradiction that~$\tau = \infty$.
 Letting~$m \to \infty$, we get
\begin{align*}
  n^2 \epsilon^2 \ \geq \ & 4 \sum_{t = 0}^{\infty} \sum_{i \in [n], \alpha_i (t) < 1} \|x_i (t) - x_i (t + 1) \|^2 \\
                 \ \geq \ & 4 \sum_{t \geq 0, \max_{i \in [n]} \alpha_i (t) < 1} \frac{2 \delta^2 (1 - \max_{i \in [n]} \alpha_i(t))^2}{n^8} \\
                 \ \geq \ & \sum_{k \geq 1} \frac{8 \delta^2 (1 - \max_{i \in [n]} \alpha_i (t_k))^2}{n^8} \geq \sum_{k \geq 1} \frac{8 \delta^2 (1 - \gamma)^2}{n^8} = \infty,
\end{align*}
 a contradiction.
 This completes the proof.
\end{proof}

 From Theorem~\ref{T3}, if~$\limsup_{t \to \infty} \max_{i \in [n]} \alpha_i (t) < 1$, then~$\tau_{\alpha, \delta} < \infty$.
 Thus, if~$\mathscr{G} (\tau_{\alpha, \delta})$ is connected for some~$0 < \delta \leq \epsilon$, then by Theorem~\ref{T1}, a consensus is reached eventually.
 The main parts of the proof of Theorem~\ref{T3} resemble the ones in the proof of Theorem~2 in~\cite{2}.
 The similarities between the proofs consist in the derivation of a lower bound for
 $$ \sum_{i = 1}^n \,\|x_i (t) - x_i (t + 1) \|^{2} $$
 by restricting to a~$\delta$-nontrivial component and then choosing a bounded function to construct an inequality involving the sum.
 The main difference is that~Theorem~\ref{T3} assumes that
\begin{equation}
\label{eq:limsup}
  \limsup_{t \to \infty} \max_{i \in [n]} \alpha_i (t) < 1
\end{equation}
 to ensure that the smallest eigenvalue of~$(I - B (t))'(I - B (t))$ is simple, but Theorem~2 in~\cite{2} has no such assumptions since~\eqref{eq:limsup}
 automatically holds if~$\alpha (t) = \Vec{0}$ for all~$t \geq 0$.
 Theorem~2 in~\cite{2} states that the termination time of the synchronous HK model is independent of~$d$ and bounded from above.
 In fact, the result is a special case of the following corollary.
\begin{corollary}
\label{Co1}
 Assume that~$\sup_{t \in \mathbf{N}} \max_{i \in [n]} \alpha_i (t) < 1$.
 Then, $\tau_{\alpha, \delta}$ is bounded from above.
 Also, letting~$\tau_m = \tau_{\alpha, \epsilon / m}$ for~$m \geq 4$, there is no interactions between any two components of~$\mathscr{G} (t)$ at the next time step
 for some $M \geq 4$ and for all~$t \geq \tau_M$, i.e.,
 $$ \mathscr{G} (t) = \mathscr{G} (\tau_M) \quad \hbox{for some} \quad M \geq 4 \quad \hbox{and for all} \quad t \geq \tau_M. $$
 Hence, $x$ in \eqref{mHK} is asymptotically stable.
\end{corollary}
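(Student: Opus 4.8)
The plan is to prove the three assertions in turn, using the energy functional $Z$ of Lemma~\ref{NL8} as the central bookkeeping device, and writing $\gamma:=\sup_{t}\max_{i}\alpha_i(t)<1$. For the uniform bound on $\tau_{\alpha,\delta}$, observe that as long as $\mathscr{G}(t)$ still has a $\delta$-nontrivial component, the lower bound established inside the proof of Theorem~\ref{T3} gives $\sum_i\|x_i(t)-x_i(t+1)\|^2 > 2\delta^2(1-\gamma)^2/n^8$; since every $\alpha_i(t)\le\gamma<1$, the parenthetical factor in Lemma~\ref{NL8} is at least $1$, so $Z(t)-Z(t+1)\ge 4\sum_i\|x_i(t)-x_i(t+1)\|^2 > 8\delta^2(1-\gamma)^2/n^8$. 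Because $0\le Z\le Z(0)\le n^2\epsilon^2$ and $Z$ is nonincreasing, at most $n^{10}\epsilon^2/\bigl(8\delta^2(1-\gamma)^2\bigr)$ such steps can occur, a number independent of the initial profile; this is exactly the assertion $\tau_{\alpha,\delta}\le n^{10}\epsilon^2/\bigl(8\delta^2(1-\gamma)^2\bigr)<\infty$.

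The core of the corollary is to show that $\mathscr{G}(t)$ is eventually constant. First I record the global budget: summing $Z(t)-Z(t+1)\ge 4\sum_i\|x_i(t)-x_i(t+1)\|^2$ over all $t$ gives $\sum_{t\ge0}\sum_i\|x_i(t)-x_i(t+1)\|^2\le Z(0)/4\le n^2\epsilon^2/4$, so the tail $E(T):=\sum_{t\ge T}\sum_i\|x_i(t)-x_i(t+1)\|^2\to0$. I then split on the behaviour of $\tau_m=\tau_{\alpha,\epsilon/m}$, which is nondecreasing in $m$. If $(\tau_m)$ is bounded, it is eventually equal to some $\tau_\infty$, at which every component is $(\epsilon/m)$-trivial for all large $m$, hence a single point; distinct points are more than $\epsilon$ apart and each equals its own neighbour-average, so $x$ is already steady and $\mathscr{G}$ is constant from $\tau_\infty$ on. The substantive case is $\tau_m\to\infty$. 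Fix $M\ge 8n$ (sized below) and suppose, for contradiction, that an edge between two distinct components of $\mathscr{G}(\tau_M)$ appears after $\tau_M$; let $t_1>\tau_M$ be the first such merging time, between clusters $C_a\ni a$ and $C_b\ni b$. Up to $t_1$ the clusters have no inter-cluster neighbours, so each evolves by its own averaging dynamics and, by Lemma~\ref{L3}, stays $(\epsilon/M)$-trivial and therefore complete; a complete cluster cannot split, so the first change is indeed this merge of two tight clusters.

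The key estimate is that this first merge must spend a definite share of the finite budget, which I establish by two cases on the bridge distance $r:=\|x_a(t_1)-x_b(t_1)\|\le\epsilon$. If $r<\epsilon/2$, the pair distance dropped from $>\epsilon$ to $<\epsilon/2$ in one step, so $\|x_a(t_1-1)-x_a(t_1)\|+\|x_b(t_1-1)-x_b(t_1)\|>\epsilon/2$ and some vertex moved by more than $\epsilon/4$ at step $t_1-1\ge\tau_M$. If $r\ge\epsilon/2$, I look one step further: at $t_1$ the set $N_a$ lies in $C_a\cup C_b$, the $C_a$-terms contribute at most $n\epsilon/M$ to $\sum_{k\in N_a}(x_k-x_a)$, while the $C_b$-terms are vectors each within $\epsilon/M$ of $x_b-x_a$, whence that partial sum has norm at least $r-n\epsilon/M$; thus $\|\bar{x}_{N_a}-x_a\|=|N_a|^{-1}\|\sum_{k\in N_a}(x_k-x_a)\|\ge(\epsilon/2-2n\epsilon/M)/n\ge\epsilon/(4n)$ once $M\ge 8n$, giving $\|x_a(t_1)-x_a(t_1+1)\|=(1-\alpha_a(t_1))\|\bar{x}_{N_a}-x_a\|\ge(1-\gamma)\epsilon/(4n)$ at step $t_1\ge\tau_M$. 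Either way a single step of index $\ge\tau_M$ contributes at least $c:=\bigl((1-\gamma)\epsilon/(4n)\bigr)^2>0$ to $E(\tau_M)$, with $c$ independent of $M$. Since $\tau_M\to\infty$ forces $E(\tau_M)\to0$, I choose $M\ge 8n$ with $E(\tau_M)<c$ to reach a contradiction; hence no merge occurs after $\tau_M$, the tight clusters never split, and $\mathscr{G}(t)=\mathscr{G}(\tau_M)$ for all $t\ge\tau_M$.

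Finally, with the profile frozen after $\tau_M$, each component is a fixed complete ($\epsilon$-trivial) graph on which $\beta_t\le\max_i\alpha_i(t)\le\gamma<1$; Theorem~\ref{T1} forces the component diameters, and hence each $d_t^i$, to $0$ geometrically, so $\sum_t(1-\alpha_i(t))\bigl(1-1/|N_i(t)|\bigr)d_t^i\le\sum_t d_t^i<\infty$ and Theorem~\ref{T2} yields $x_i(t)\to x_i^*$ for every $i$. The limit collapses each component to a point with distinct points more than $\epsilon$ apart, which is a fixed point of~\eqref{mHK}, so $x$ is asymptotically stable. I expect the only delicate point to be the per-merge lower bound $c$: it relies on the tightness of the clusters at the first merge, used both to align the $C_b$-contributions and to render the $C_a$-contribution negligible, which is precisely why the two cases and the threshold $M\ge 8n$ are needed.
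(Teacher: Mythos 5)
Your first paragraph (the uniform bound on $\tau_{\alpha,\delta}$) is essentially the paper's own argument, and your overall architecture --- treat $Z$ from Lemma~\ref{NL8} as a finite energy budget, show every interaction event after $\tau_M$ costs a definite amount of energy, and conclude the profile freezes --- is also the paper's. The gap is in your per-merge cost estimate, exactly at the point you flag as delicate. You assert that at the first merging time $t_1$ the neighbourhood $N_a(t_1)$ of the bridge vertex $a$ lies in $C_a\cup C_b$. Nothing forces this: $t_1$ is only the first time \emph{some} inter-cluster edge appears, and several clusters can become adjacent simultaneously, so $a$ may also have neighbours in a third cluster $C_c$. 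Your alignment argument then collapses: if $C_b$ and $C_c$ sit on opposite sides of $C_a$ (three nearly collinear tight clusters whose consecutive gaps drop just below $\epsilon$ in one step, which is possible for $d\geq 2$), the sums $\sum_{k\in N_a\cap C_b}(x_k-x_a)$ and $\sum_{k\in N_a\cap C_c}(x_k-x_a)$ nearly cancel, and every vertex of $C_a$ moves by only $O(n\epsilon/M)$, which is not bounded below by a constant $c$ independent of $M$. So the claim ``a single step of index $\geq\tau_M$ contributes at least $c$ to $E(\tau_M)$'' is unproven as written; choosing the pair $(a,b)$ differently does not save it, because your proof gives no criterion for which endpoint of which inter-cluster edge to use.

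The statement you need is true, but handling arbitrary simultaneous multi-cluster merges is precisely what the paper's spectral machinery is for. At time $t_1$ the merged component is connected and $\epsilon/2$-nontrivial (each cluster stays inside its own convex hull during the independent step, so inter-cluster distances at $t_1$ exceed $\epsilon-2\epsilon/M\geq 3\epsilon/4$), and the estimate inside the proof of Theorem~\ref{T3} --- built on Lemma~\ref{L5}, the Courant-Fischer formula and Cheeger's inequality --- lower-bounds the aggregate $\sum_i\|x_i(t_1)-x_i(t_1+1)\|^2$ by $2(\epsilon/2)^2(1-\gamma)^2/n^8$, a quantity immune to cancellations at any single vertex. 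This is how the paper proceeds: it proves that an interaction between components forces an $\epsilon/2$-nontrivial component at the next step, and then counts such times against the energy budget (its set $\mathscr{A}$), rather than tracking a designated bridge vertex. If you insist on an elementary route, you must pick your vertex in an \emph{extremal} cluster, e.g.\ one realizing the diameter of the set of cluster centers; a supporting-hyperplane (``lens'') computation then shows all its inter-cluster contributions point strictly backwards, but this needs $M$ of order $n^2$ and yields a smaller $c$. The remaining pieces of your proposal --- the bounded-$(\tau_m)$ case, the fact that complete clusters cannot split, and the final passage to asymptotic stability via Theorems~\ref{T1} and~\ref{T2} --- are correct.
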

\begin{proof}
 Because
 $$ \limsup_{t \to \infty} \max_{i \in [n]} \alpha_i (t) \leq \sup_{t\in\mathbf{N}} \max_{i \in [n]} \alpha_i (t) < 1, $$
 it follows from Theorem \ref{T3} that~$\tau < \infty$.
 For~$\tau\geq1$, setting~$m = \tau$ in~\eqref{st}, we get
 $$ \begin{array}{rcl}
    \displaystyle  n^2 \epsilon^2 & \n > \n &
    \displaystyle 4 \sum_{t = 0}^{\tau - 1} \sum_{i \in [n], \alpha_i (t) < 1} \|x_i (t) - x_i (t + 1) \|^2 \vspace*{4pt} \\ & \n = \n &
    \displaystyle 4 \sum_{t = 0}^{\tau - 1} \sum_{i = 1}^n \,\|x_i (t) - x_i (t + 1) \|^2 \geq
    \displaystyle 4 \sum_{t = 0}^{\tau - 1} \ \frac{2 \delta^2 (1 - \max_{i \in [n]} \alpha_i (t))^2}{n^8} \vspace*{8pt} \\ & \n \geq \n &
    \displaystyle \frac{8 \tau \delta^2(1 - \sup_{t \in \mathbf{N}} \max_{i \in [n]} \alpha_i(t))^2}{n^8}, \end{array} $$
 from which it follows that
 $$ \tau < \frac{n^{10}}{8 (1 - \sup_{t \in \mathbf{N}} \max_{i \in [n]} \alpha_i (t))^2} \bigg(\frac{\epsilon}{\delta} \bigg)^2. $$
 Hence,~$\tau$ is bounded from above.
 To show the asymptotic stability of~$x$, we first observe that~$\tau_m$ is finite and nondecreasing with respect to~$m$. For all~$0<\delta\leq\epsilon/4$ and~$t\geq 0$, assume that every component of~$\mathscr{G}(t)$ is~$\delta$-trivial. Then, the following three conditions are equivalent:
 \begin{enumerate}
     \item\label{q1} Some component of~$\mathscr{G}(t+1)$ is~$\delta$-nontrivial.\vspace*{2pt}
     \item\label{q2} Some components of~$\mathscr{G}(t)$ interact at time~$t+1$.\vspace*{2pt}
     \item\label{q3} Some component of~$\mathscr{G}(t+1)$ is~$\epsilon/2$-nontrivial.
 \end{enumerate}
 It is clear that~\ref{q1} $\Rightarrow$ \ref{q2} and~\ref{q3} $\Rightarrow$ \ref{q1}; therefore we show~\ref{q2} $\Rightarrow$ \ref{q3}.
 \begin{proof}[Proof of \ref{q2} $\Rightarrow$ \ref{q3}]
 Let the convex hull of a component~$G$ be
 $$ Cv(G) = C(\{x_j : j \in V(G) \}). $$
 The fact that some components of~$\mathscr{G} (t)$ interact at time~$t+1$ implies that there exist
 $$ i, j \in [n] \quad \hbox{with} \quad ij \in \mathscr{E} (t + 1), \quad \quad i\in V(G_{\Tilde{i}})\quad \hbox{and}\ j\in V(G_{\Tilde{j}}) $$
 for some distinct components~$G_{\Tilde{i}}$ and~$G_{\Tilde{j}}$ of~$\mathscr{G} (t)$.
 Therefore,
 $$ x_i (t + 1) \in Cv (G_{\Tilde{i}}) \quad \hbox{and} \quad x_j (t + 1) \in Cv (G_{\Tilde{j}}). $$
 Hence,
\begin{align*}
 \epsilon \ < \ & \|x_i (t) - x_j (t) \| \\
          \ \leq \ & \|x_i (t) - x_i (t + 1) \| + \|x_i (t + 1) - x_j (t + 1) \| + \|x_j (t + 1) - x_j (t) \| \\
          \ \leq \ & \delta + \|x_i (t + 1) - x_j (t + 1) \| + \delta = \|x_i (t + 1) - x_j (t + 1) \| + 2 \delta. \end{align*}
This implies that
$$ \|x_i (t + 1) - x_j (t + 1) \| > \epsilon - 2 \delta \geq \epsilon - 2 \cdot \frac{\epsilon}{4} = \frac{\epsilon}{2} \quad \hbox{for all} \quad 0 < \delta \leq \frac{\epsilon}{4} $$
so the component of~$\mathscr{G} (t + 1)$ containing~$ij$ is~$\epsilon/2$-nontrivial.
 \end{proof}

 Let
 $$ A_m = \{t \in [\tau_m, \tau_{m + 1}) : \hbox{some component of $\mathscr{G}(t)$ is $\epsilon/m$-nontrivial} \} $$
 and~$t_m = \inf A_m$.
 $$\hbox{Claim:}\quad \hbox{the set}\ \mathscr{A} := \{t_k : A_k \neq \varnothing \} \  \hbox{is finite}.$$
For~$t_m \in \mathscr{A}$, since some component of~$\mathscr{G} (t_m)$ is~$\epsilon/m$-nontrivial and all components of~$\mathscr{G} (t_m - 1)$
 are~$\epsilon/m$-trivial, by~\ref{q1}$\Rightarrow$ \ref{q3}, some component of~$\mathscr{G} (t_m)$ is~$\epsilon/2$-nontrivial.
 Using~\eqref{st} and letting~$m \to \infty$, we get
\begin{align*}
  n^2 \epsilon^2 \ \geq \ & 4 \sum_{t \geq 0} \sum_{i \in [n], \alpha_i (t) < 1} \|x_i (t) - x_i (t + 1) \|^2 =
                            4 \sum_{t \geq 0} \,\sum_{i = 1}^n \,\|x_i (t) - x_i (t + 1) \|^2 \\
                 \ \geq \ & 4 \sum_{t \in \mathscr{A}} \ \frac{2 (\epsilon/2)^2 (1 - \max_{i \in [n]} \alpha_i (t))^2}{n^8} \\
                 \ \geq \ & |\mathscr{A}| \ \frac{8 (\epsilon/2)^2 (1 - \sup_{t \geq 0} \max_{i \in [n]} \alpha_i (t))^2}{n^8}, \end{align*}
 from which it follows that
 $$ |\mathscr{A}| \leq \frac{n^{10}}{2(1 - \sup_{t \geq 0} \max_{i \in [n]} \alpha_i (t))^2}. $$
 Hence, the set~$\mathscr{A}$ is finite.
 By the fact that~$\mathscr{A}$ is finite and that~\ref{q2}$\Rightarrow$ \ref{q1}, there is no interactions between any two components of~$\mathscr{G} (s)$ at the next time step for some~$M \geq 4$ and for all~$s \geq \tau_M$. Hence, we deduce that
 every component of~$\mathscr{G} (\tau_M)$ is an independent system.
 Since in addition
 $$ \limsup_{t \to \infty} \beta_t \leq \sup_{t \in \mathbf{N}} \beta_t \leq \sup_{t \in \mathbf{N}} \max_{i \in [n]} \alpha_i (t) < 1, $$
 by Theorem~\ref{T1}, $x$ in~\eqref{mHK} is asymptotically stable.
\end{proof}

 Note that the upper bound for~$\tau$ is independent of $d$, and~\eqref{mHK} reduces to the synchronous HK
 model if~$\alpha (t) = \Vec{0}$ for all~$t \geq 0$.
 Since~$\sup_{t \geq 0} \max_{i \in [n]} \alpha_i (t) < 1$ automatically holds if~$\alpha (t) = \Vec{0}$ at all times, $\mathscr{G} (s) = \mathscr{G} (\tau_M)$
 for some~$M \geq 4$ and for all~$s \geq \tau_M$.
 This shows that~$\mathscr{G} (\tau_M + 1)$ is a steady state and that the termination time of the synchronous HK model is bounded from above.

\section{Conclusion}
 The mixed HK model covers both the synchronous and the asynchronous HK models, and is therefore more general and more complicated.
 At each time step, each agent can choose its degree of stubbornness and mix its opinion with the average opinion of its neighbors.
 Agents with the same opinion may depart later, depicting the changeability of agents, which is closer to real world circumstances. Given the givens, make it more difficult to reach asymptotic stability or a steady state.
 However, under some conditions, not only does the asymptotic stability hold, but also a consensus can be achieved.

\medskip
Received October 2020; revised December 2020.
\medskip

\end{document}